\let\oldmarginpar\marginpar
\renewcommand\marginpar[1]{\-\oldmarginpar[\raggedleft\footnotesize #1]%
{\raggedright\footnotesize #1}}
\newtheorem{theorem}{Theorem}
\newtheorem{corollary}[theorem]{Corollary}
\newtheorem{lemma}[theorem]{Lemma}
\newtheorem{proposition}[theorem]{Proposition}
\newtheorem{obs}[theorem]{Observation}
\theoremstyle{definition}
\newtheorem{definition}[theorem]{Definition}
\newtheorem{remark}[theorem]{Remark}
\numberwithin{equation}{section}
\renewcommand{\deg}{d}
\newcommand{\girth}{\mathop{\mathrm{girth}}}
\newcommand{\Nnn}{\mathbb{N}}
\newcommand{\vanish}[1]{}
\begin{document}

\title{Reconfiguration graphs of shortest paths\thanks{This project was initiated as part of the REUF program at AIM, NSF grant DMS 1620073.}
\\
}

\author{
John Asplund\\
{\small Department of Technology and Mathematics,} \\
{\small Dalton State College,} \\
{\small Dalton, GA 30720, USA} \\
{\small jasplund@daltonstate.edu}\\
\\
Kossi Edoh\\
{\small Department of Mathematics} \\
{\small North Carolina Agricultural and Technical State University} \\
{\small Greensboro, NC 27411, USA} \\
{\small kdedoh@ncat.edu} \\
\\
Ruth Haas \thanks{Partially supported by Simons Foundation Award Number 281291}\\
{\small Department of Mathematics,} \\
{\small University of Hawaii at Manoa,}\\
{\small Honolulu, Hawaii 96822, USA}\\
{\small rhaas@hawaii.edu}\\
{\small and Smith College, Northampton MA 01063}\\
\\
Yulia Hristova\\
{\small Department of Mathematics and Statistics,}\\
{\small University of Michigan - Dearborn,}\\
{\small Dearborn, MI 48128, USA}\\
{\small yuliagh@umich.edu}\\
\\
Beth Novick\\
{\small Department of Mathematical Sciences,} \\
{\small Clemson University,} \\
{\small Clemson, SC 29634, USA} \\
{\small nbeth@clemson.edu}\\
\\
Brett Werner\\
{\small Department of Mathematics, Computer Science \& Cooperative Engineering,} \\
{\small University of St. Thomas,} \\
{\small Houston, TX 77006, USA} \\
{\small wernerb@stthom.edu}\\
\\
 }

\maketitle

\begin{abstract}
For a graph $G$   and $a,b\in V(G)$,  the  shortest path reconfiguration graph of $G$ with respect to $a$ and $b$ is denoted by $S(G,a,b)$. The vertex set of $S(G,a,b)$ is the set of  all shortest paths between $a$ and $b$ in $G$.
Two vertices in $V(S(G,a,b))$ are adjacent,  if their corresponding paths in $G$ differ by exactly one vertex.
This paper examines the
 properties of shortest path graphs. Results include establishing classes of graphs that appear as shortest path graphs, decompositions and sums involving shortest path graphs, and the complete classification of shortest path graphs with girth $5$ or greater.  We also show that the shortest path graph of a grid graph is an induced subgraph of a lattice.
\end{abstract}

\section{Introduction}

The goal of reconfiguration problems is to determine whether it is possible to transform one feasible solution $s$ into a target feasible solution $t$ in a step-by-step manner (a reconfiguration) such that each intermediate solution is also feasible.
Such transformations can be studied via the reconfiguration graph, in which the vertices represent the feasible solutions and there is an edge between two vertices when it is possible to get from one feasible solution to another in one application of the reconfiguration rule.
Reconfiguration versions of vertex coloring \cite{BJLPP,BC,CJV,CJV2,CJV3}, independent sets \cite{HD,IDHPSUU,KMM}, matchings \cite{IDHPSUU}, list-colorings \cite{IKD}, matroid bases \cite{IDHPSUU}, and subsets of a (multi)set of numbers \cite{EW}, have been studied.
 This paper concerns the reconfiguration of shortest paths in a graph.

\begin{definition}   Let $G$ be a graph with distinct vertices $a$ and $b$. The \emph{shortest path graph} of $G$ with respect to $a$ and $b$ is the graph $S(G,a,b)$ in which every vertex $U$ corresponds to a shortest path in $G$ between $a$ and $b$, and two vertices $U,W \in V(S(G,a,b))$ are adjacent if and only if their corresponding paths in $G$ differ in exactly one vertex. 
\end{definition}

While there have been investigations into shortest path reconfiguration in  \cite{B,KMM2,KMM}, these papers focused on the complexity of changing one shortest path into another\footnote{The shortest path graph is denoted  by SP$(G,a,b)$ in \cite{B}.}. It was found in \cite{B} that the complexity of this problem is PSPACE-complete.  In contrast, the focus of our work is on the structure of shortest path graphs, rather than algorithms. Our main goal is to understand which graphs occur as  shortest path graphs. 
A similar study on classifying color reconfiguration graphs can be found in \cite{beier2016classifying}.

The paper is organized as follows.
Some definitions and notations are provided in Section~\ref{notation}.   Section~\ref{general}  contains some useful properties and examples. 
In particular, we show that paths and complete graphs are shortest path graphs.
 In Section~\ref{one-two_sum}  we show that the family of shortest path graphs is closed under  disjoint union and under Cartesian products.
We establish  a decomposition result  which suggests that, typically, $4$-cycles are prevalent in shortest path graphs.  Thus, we would expect the structure of shortest path graphs containing no $4$-cycles to be rather simple. This is substantiated in Section~\ref{girth5},  where we give a remarkably simple characterization of
shortest path graphs with girth $5$ or greater.  In the process of establishing this characterization, we show that 
 the claw and the odd cycle $C_k$,  for $k>3$ are, in a sense, forcing structures. As a consequence, we determine precisely which cycles are shortest path graphs;  that the claw, by itself, is not a shortest path graph;  and that a tree cannot be a shortest path graph unless it is a path.

 In contrast, our main theorem in the final section of the paper involves a class of shortest path graphs which contain many $4$-cycles. We establish that the shortest path graph of a grid graph is an induced subgraph of the lattice. One consequence of our construction is that the shortest path graph of the hypercube $Q_n$ with respect to two diametric vertices is a Cayley graph on the symmetric group $S_n$.

\setcounter{section}{1}
\section{Preliminaries}\label{notation}

Let $G$ be a graph with distinct vertices $a$ and $b$.  A shortest $a,b$-path in $G$ is a path between $a$ and $b$ of length $d_G(a,b)$.  When it causes no confusion, we write $d(a,b)$ to mean $d_G(a,b)$.
 We often refer to a shortest path as a geodesic and to a shortest $a,b$-path as an $a,b$-{\em geodesic}. Note that any subpath of a geodesic is a geodesic.

If the paths corresponding to  two adjacent  vertices $U, W$ in $S(G,a,b)$ are $a v_1\cdots v_{i-1}v_i v_{i+1} \cdots v_pb$ and
$av_1\cdots v_{i-1}v_i'v_{i+1}\cdots v_pb$, we say that $U$ and $W$
differ in the $i^{{\rm th}}$ index, or that $i$ is the \textit{difference index} of the edge $UW$. We call the  graph $G$  the \textit{base graph} of $S(G,a,b)$, and we say that a graph $H$ is a shortest path graph, if there exists a graph $G$ with $a,b\in V(G)$ such that $S(G,a,b)\cong H$.
Several examples are given in Figure~\ref{moreExamples}.
With a slight abuse of notation, a label for a vertex in the shortest path graph will often also represent the corresponding path in its  base graph.
To avoid confusion between vertices in $G$ and vertices in $S(G,a,b)$, throughout this paper,  we will use lower case letters to denote vertices in the base graph,  and upper case letters to denote vertices in $S(G, a,b)$.

It can  easily be seen that several base graphs can have the same shortest path graph. For example,  if  $e\in E(G)$ and $e$ is an edge not in any
$a,b$-geodesic, then $S(G, a, b) \cong S(G\setminus e, a, b)$.
To this end, we  define the \textit{reduced graph,} $(G,a,b)$,  to be the graph obtained from $G$ by deleting  any edge or vertex that does not occur in
any $a,b$-geodesic,  and contracting any edge that occurs  in all $a,b$-geodesics.
If the reduced graph $(G, a, b)$ is again $G$ then
$G$ is  called a  {\em reduced graph} with respect to $a, b$.
 We may omit the reference to $a,b$ when it is clear from context. 

\begin{figure}[htb]
\begin{center}
\includegraphics[scale=1]{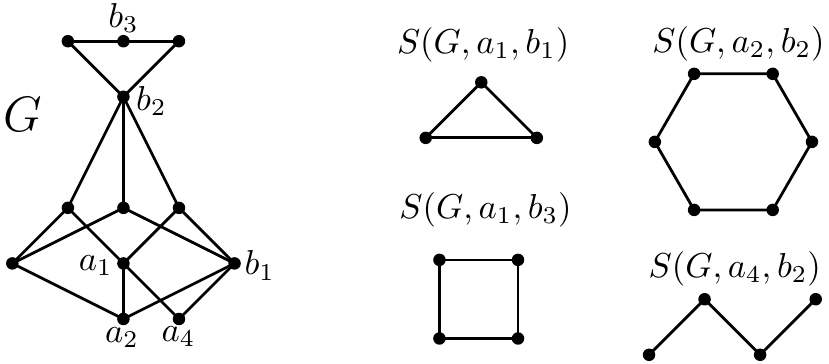}
\end{center}
\caption{Base graph $G$ (left) with several shortest path graphs (right).
}\label{moreExamples}
\end{figure}

We conclude this section with a review of  some basic definitions.
If $G_1$ and $G_2$ are graphs then $G_1\cup G_2$ is defined to be the graph whose vertex set is $V(G_1)\cup V(G_2)$ and whose edge set is $E(G_1) \cup E(G_2)$.  When $V(G_1)\cap V(G_2) =\varnothing$ we say that $G_1$ and $G_2$ are disjoint, and  refer to $G_1\cup G_2$ as the disjoint union of $G_1$ and $G_2$.
For two graphs $G_1$ and $G_2$, the Cartesian product $G_1\,\square\, G_2$ is a graph with vertex set $V(G_1)\times V(G_2)$ and edge set
$\{(v_1,v_2)(u_1,u_2)\,:\, v_i,u_i\in V(G_i) \text{ for } i\in\{1,2\} \text{ and either }v_1=v_2\text{ and } u_1\sim u_2, \text{ or } v_1\sim v_2 \text{ and } u_1=u_2\}.$
If $U_1$ is a $v_0,v_{\ell}$-path and $U_2$ is a $v_{\ell},v_m$-path, where $U_1$ and $U_2$ have only one vertex in common,  namely $v_{\ell}$, then the concatenation of $U_1$ and $U_2$ is the $v_0,v_m$-path $U_1\circ U_2=v_0v_1 \ldots v_{\ell} v_{\ell+1} \ldots v_m $.
A hypercube of dimension $n$,  denoted $Q_n$, is the graph formed by labeling a vertex with each of the $2^n$ binary sequences of length $n$, and joining two vertices with an edge if and only if their sequences differ in exactly one position.

\section{General Properties, Examples,  and Constructions}\label{general}

In this section we answer some natural questions as to which classes of graphs are shortest path graphs.
We easily see that the  empty graph is a shortest path graph. We show that paths and complete graphs are shortest path graphs as well.

\begin{proposition}\label{emptyGraph}
Let $G$ be a graph formed by joining  $t$  paths of equal length greater than $2$,  each having the same end vertices,  $a$ and $b$,  and with all other vertices between any two paths being distinct. Then $S(G,a,b)=\overline{K_t}$.
\end{proposition}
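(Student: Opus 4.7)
The plan is to proceed in two short steps: first identify the vertex set of $S(G,a,b)$ by enumerating the shortest $a,b$-paths in $G$, and then check that no two such vertices can be adjacent.

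For the first step, I would observe that all $t$ given paths have a common length $\ell > 2$, so $d_G(a,b) \le \ell$. The crucial structural fact is that $G - \{a,b\}$ decomposes into $t$ vertex-disjoint paths, one internal segment from each branch. Consequently, any simple $a,b$-path in $G$ must enter $G - \{a,b\}$ through a neighbor of $a$ in some single branch, stay inside that one component (since the components are disjoint and the path is simple), and exit only upon reaching $b$. This forces every $a,b$-path to traverse exactly one of the $t$ branches in full, which simultaneously shows $d_G(a,b) = \ell$ and that the shortest $a,b$-paths are precisely the $t$ given paths. Thus $|V(S(G,a,b))| = t$.

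For the second step, let $P_i$ and $P_j$ be two distinct shortest $a,b$-paths. Since the branches share only the vertices $a$ and $b$, the paths $P_i$ and $P_j$ agree only in index $0$ (the vertex $a$) and index $\ell$ (the vertex $b$), and disagree in all $\ell - 1$ internal indices. The hypothesis $\ell > 2$ gives $\ell - 1 \ge 2$, so $P_i$ and $P_j$ differ in at least two indices and therefore are not adjacent in $S(G,a,b)$. Combining the two steps yields a graph on $t$ vertices with no edges, so $S(G,a,b) \cong \overline{K_t}$.

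There is no serious obstacle here; the only subtle point is verifying that no ``mixed'' $a,b$-path can sneak in as a shortest path, which is immediately ruled out by the disjointness of the $t$ branches outside $\{a,b\}$. The role of the hypothesis $\ell > 2$ is simply to guarantee that there are at least two internal indices at which distinct branches differ, since for $\ell = 2$ any two branches would differ in exactly one index and produce edges in $S(G,a,b)$.
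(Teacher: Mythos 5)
Your proof is correct: the internal disjointness of the $t$ branches forces every $a,b$-path to lie in a single branch, and the hypothesis that the common length exceeds $2$ guarantees at least two internal indices of disagreement between distinct branches, so no edges arise. The paper states this proposition without proof, treating it as routine; your argument is exactly the verification being omitted, including the correct identification of why length $2$ must be excluded (that case yields $K_t$ rather than $\overline{K_t}$, consistent with the paper's later observation that $S(K_{2,n},a,b)\cong K_n$).
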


Before finding shortest path graphs with edges, we make
the following simple observation  which will  be used implicitly throughout.

\begin{obs}\label{fundObs}
Let $H$ be a shortest path graph. If $U_1U_2U_3$ is an induced path  in $H$ then $U_1U_2$ and $U_2U_3$ have distinct difference indices.
\end{obs}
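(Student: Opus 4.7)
The plan is to argue by contradiction, leveraging the base graph $G$ with $S(G,a,b)\cong H$. Suppose, toward a contradiction, that the edges $U_1U_2$ and $U_2U_3$ share a common difference index $i$. Writing $U_2$ as the path $a v_1 \cdots v_{i-1} v_i v_{i+1}\cdots v_p b$, the hypothesis tells me that $U_1$ and $U_3$ each agree with $U_2$ in every coordinate except the $i^{\text{th}}$. Call the $i^{\text{th}}$ vertex of $U_1$ the vertex $u$ and the $i^{\text{th}}$ vertex of $U_3$ the vertex $w$, where $u,w\neq v_i$ by definition of the difference index.

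I would then split into two cases according to whether $u=w$ or $u\neq w$. If $u=w$, the paths $U_1$ and $U_3$ are identical, so they represent the same vertex of $S(G,a,b)$; but that contradicts the assumption that $U_1U_2U_3$ is a path (whose vertices are by convention distinct). If instead $u\neq w$, then $U_1$ and $U_3$ still agree in every coordinate except the $i^{\text{th}}$ (where they take different values $u$ and $w$), so they differ in exactly one vertex. This means $U_1U_3$ is an edge of $S(G,a,b)$, contradicting the hypothesis that the path $U_1U_2U_3$ is induced.

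Since both cases yield contradictions, the assumption that $U_1U_2$ and $U_2U_3$ share a difference index must be false, proving the observation. The argument is essentially a bookkeeping exercise about which coordinate is being changed, and the main (very mild) subtlety is remembering that when two edges share a difference index the ``third vertex'' may or may not revert to the original — both possibilities must be ruled out, one using the requirement that $U_1,U_2,U_3$ are distinct, and the other using the inducedness of the path.
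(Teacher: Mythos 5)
Your proof is correct; the paper states this observation without proof, treating it as immediate, and your case analysis (either $U_1=U_3$, contradicting distinctness, or $U_1\sim U_3$, contradicting inducedness) is exactly the argument the paper implicitly relies on. Nothing is missing.
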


We use this observation to construct a family of graphs whose shortest path graphs are paths.

\begin{lemma}\label{firstpath}
For any $k\geq 1$, the path $P_k$ is a shortest path graph.
\end{lemma}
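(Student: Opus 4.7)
The plan is to realize $P_k$ concretely as $S(G_k,a,b)$ for an explicit base graph $G_k$. For $k=1$ a single edge $ab$ works, since $S(K_2,a,b)=K_1=P_1$. For $k \ge 2$, I would take $G_k = P_2 \,\square\, P_k$ to be the $2 \times k$ grid, labeling its vertices $(i,j)$ with $i \in \{0,1\}$ and $j \in \{0,1,\dots,k-1\}$, and set $a=(0,0)$ and $b=(1,k-1)$. The key virtue of this base graph is that $a,b$-geodesics are parameterized by a single "crossing column," which makes their adjacency structure immediately a path.

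The first step is to observe that $d_{G_k}(a,b)=k$, because every $a,b$-walk must contain at least one "vertical" (row-changing) step and at least $k-1$ "horizontal" (column-advancing) steps, and the path that moves down at column $0$ then across achieves this length. The second step is to enumerate all $a,b$-geodesics: such a path uses exactly one vertical step and exactly $k-1$ horizontal steps, so it is uniquely determined by the column $c \in \{0,1,\dots,k-1\}$ at which the vertical step occurs. Denote this geodesic $U_c$; there are then exactly $k$ geodesics $U_0,U_1,\dots,U_{k-1}$, with $U_c$ visiting $(0,0),(0,1),\dots,(0,c),(1,c),(1,c+1),\dots,(1,k-1)$ in order.

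The final step is to read off the edges of $S(G_k,a,b)$. Comparing $U_c$ with $U_{c+1}$ position by position, they agree everywhere except at position $c+1$, where $U_c$ has $(1,c)$ and $U_{c+1}$ has $(0,c+1)$; hence $U_c U_{c+1}$ is an edge of $S(G_k,a,b)$. If $|c-c'| \ge 2$, by contrast, $U_c$ and $U_{c'}$ disagree at every position indexed by $\min(c,c')+1,\dots,\max(c,c')$, which is at least two positions, so $U_c U_{c'}$ is not an edge. Therefore $S(G_k,a,b) = U_0 U_1 \cdots U_{k-1} \cong P_k$, as required. The argument is a direct verification and I do not anticipate a genuine obstacle; the only place requiring a moment's care is the enumeration of geodesics, which is immediate once one notices that in a $2 \times k$ grid each shortest path is determined by its unique crossing column.
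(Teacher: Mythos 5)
Your construction is correct, but it is a genuinely different one from the paper's. The paper builds a base graph $G_k$ of diameter $3$: two layers $v_0,\dots,v_{\lfloor k/2\rfloor}$ and $v_0',\dots,v_{\lceil k/2\rceil}'$ joined in a zigzag, so that the $a,b$-geodesics $av_iv_j'b$ correspond to the edges of a path in the bipartite middle layer and the shortest path graph is the line graph of that path. You instead use the two-row grid with $a$ and $b$ at opposite corners, where geodesics are parameterized by the unique crossing column; your verification of which pairs $U_c,U_{c'}$ are adjacent is accurate. This is exactly the alternative base graph the paper itself brings up later (Corollary \ref{pathThm}, $S(P_k\,\square\,P_1)\cong P_k$), and the paper explicitly remarks that this grid and the $G_k$ of Lemma \ref{firstpath} are non-isomorphic reduced graphs with the same shortest path graph. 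What each buys: the paper's $G_k$ keeps $d(a,b)=3$ independent of $k$ (all the length is absorbed into breadth), while your grid has $d(a,b)$ growing with $k$ but is arguably more transparent and previews the grid-graph machinery of Section~\ref{girth4}.

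One small discrepancy to fix: you are using the convention that $P_k$ has $k$ vertices (your $P_1=K_1$), whereas the paper's $P_k$ is the path of length $k$, with $k+1$ vertices (it writes $P_1\cong Q_1$ and $S(P_k\,\square\,P_1)\cong P_k$ for the $2\times(k+1)$ grid). Your $2\times k$ grid yields a path on $k$ vertices, i.e.\ the paper's $P_{k-1}$. Since your family of examples still realizes every path, this is an off-by-one in indexing rather than a gap, but to match the statement as the paper intends it you should take the $2\times(k+1)$ grid for $P_k$.
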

\begin{proof}
 For $k\ge 1$,  define the graph  $G_k$  by
$V(G_k)= \{a,b,v_0, v_1, \ldots , v_{\lfloor k/2 \rfloor}, v_0',v_1', \ldots v'_{\lceil k/2 \rceil}\}
                                              $ and
$E(G_k)= \{av_i,  v_iv_i'\,:\, 0\le i \le \lfloor k/2 \rfloor\} \cup \{v_{i-1}v_i' \,:\, 1\le i \le \lceil k/2 \rceil\}\cup
      \{v_i'b\,:\, 0\le i \le \lceil k/2 \rceil \}.
$
One checks that  $P_k \cong S(G_k,a,b)$.
\end{proof}

\begin{lemma}  For any $n\geq 1$ the complete graph $K_n$ is a shortest path graph.
\end{lemma}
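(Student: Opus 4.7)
The plan is to produce a base graph $G$ with distinguished vertices $a,b$ whose shortest $a,b$-paths are pairwise adjacent in $S(G,a,b)$. Two paths are adjacent in the shortest path graph precisely when they differ in exactly one vertex, so the cleanest way to guarantee that all pairs of paths are adjacent is to make every shortest path have length $2$: any two such paths are of the form $a v_i b$ and $a v_j b$, and they automatically differ only in their middle vertex.

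Concretely, I would take $G$ to be the complete bipartite graph $K_{2,n}$ with bipartition $\{a,b\}$ and $\{v_1,\dots,v_n\}$. Since $a$ and $b$ lie in the same part they are not adjacent, and through each $v_i$ there is an $a,b$-path of length $2$, so $d_G(a,b)=2$. The shortest $a,b$-paths in $G$ are therefore exactly the $n$ paths $P_i = a v_i b$ for $1\leq i\leq n$, giving $|V(S(G,a,b))|=n$. For $i\neq j$ the paths $P_i$ and $P_j$ agree at indices $0$ and $2$ and differ at index $1$, so every pair of vertices in $S(G,a,b)$ is joined by an edge. Hence $S(G,a,b)\cong K_n$.

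There is no real obstacle here; the entire argument is the observation that length-$2$ paths sharing their endpoints automatically form a clique in the reconfiguration graph. The edge case $n=1$ is covered by the same construction: $K_{2,1}$ is the path $a v_1 b$, with the single shortest path yielding $S(G,a,b)\cong K_1$.
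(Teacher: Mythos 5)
Your proposal is correct and uses exactly the same construction as the paper: take $G=K_{2,n}$ with $a$ and $b$ as the two vertices on the small side of the bipartition, so that the $n$ length-$2$ paths $av_ib$ pairwise differ only in their middle vertex. You simply spell out the verification that the paper leaves implicit.
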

\begin{proof}
Let $a$ and $b$ be the vertices on one side of the bipartition of $K_{2,n}$. Then $S(K_{2,n}, a,b) \cong K_n$.
\end{proof}

In fact, as we show in the proof of  Theorem \ref{completeGraph}, any graph  for which the shortest path graph is a complete graph must reduce to  $K_{2,n}$.    We will see later that in general  there can be different reduced graphs that have the same shortest path graph.

\begin{theorem}\label{completeGraph}
$S(G,a,b)=K_n$ for some $n\in \Nnn$,  if and only if each pair of $a,b$-geodesics in $G$ differs at the same index.
\end{theorem}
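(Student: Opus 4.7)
The plan is to handle the two directions of the equivalence separately, with the forward direction being the more delicate one.

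For the reverse direction, I would suppose that every pair of $a,b$-geodesics differs at a single common index, say $i$. Then any two geodesics agree in every coordinate except the $i^{\text{th}}$, so they differ in exactly one vertex and are therefore adjacent in $S(G,a,b)$. Every pair of vertices of the shortest path graph is adjacent, and thus $S(G,a,b) = K_n$ where $n$ is the number of $a,b$-geodesics.

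For the forward direction, assume $S(G,a,b) = K_n$. Since $K_n$ is a graph in which every pair of vertices is adjacent, every pair of $a,b$-geodesics must differ in exactly one vertex; in particular every pair of geodesics has a well-defined difference index. The goal is then to show these indices all coincide. I would argue by contradiction: suppose there exist geodesics $U_1, U_2, U_3$ such that the difference index of $U_1U_2$ is $i$ and the difference index of $U_1U_3$ is $j$, with $i \ne j$. Writing out the three paths explicitly, $U_1$ and $U_2$ agree on every coordinate except the $i^{\text{th}}$, and $U_1$ and $U_3$ agree on every coordinate except the $j^{\text{th}}$. Comparing $U_2$ with $U_3$ coordinate-by-coordinate: at position $i$ they disagree (since $U_3$ agrees with $U_1$ there, while $U_2$ does not), and at position $j$ they also disagree (by the symmetric reasoning). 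Hence $U_2$ and $U_3$ differ in at least two vertices, so they are not adjacent in $S(G,a,b)$, contradicting the assumption that $S(G,a,b) = K_n$.

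The main (and essentially only) obstacle is organizing the index bookkeeping cleanly in the forward direction; all the content is captured by tracking which coordinates of $U_2$ and $U_3$ must coincide with the corresponding coordinates of $U_1$. Degenerate cases such as $n = 1$ are handled trivially, since the "same index" condition becomes vacuous. No appeal to results beyond Observation~\ref{fundObs} and the definitions is required.
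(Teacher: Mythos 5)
Your proof is correct and follows essentially the same route as the paper: the reverse direction is immediate, and the forward direction is the observation that if $U_1U_2$ and $U_1U_3$ have distinct difference indices $i\ne j$, then $U_2$ and $U_3$ differ at both positions $i$ and $j$ and hence cannot be adjacent, contradicting completeness. The paper states this same argument (with vertices $U,V,W$) only slightly more tersely, so there is nothing to add.
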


\begin{proof}
If every pair of $a,b$-geodesics differs only at the $i^{{\rm th}}$ index, for some $i$, then it is clear that $S(G,a,b)=K_n$,  where $n$ is the number of $a,b$-geodesics in $G$. 
Now suppose that $U,V$ and $W\in V(S(G,a,b))$ are such that $UV$ and $VW$ have distinct difference indices. Then the paths $U$ and $W$ differ at two vertices, so there could be no edge between them in $S(G,a,b)$.
Hence  the reduced graph of $G$ is $K_{2,n}$.
\end{proof}

It is clear that if two graphs give the same reduced graph with respect to a, b then they have the same shortest path graph. It will be   useful to be able to construct different graphs with the same reduced graph.
The next result  involves, in a sense, an operation which is the reverse  of forming a  reduced graph.

\begin{proposition}\label{manybasegraphs}
If $H=S(G,a,b)$ and $d_G(a,b) = k$, then for any $k'\geq k$ there exists a graph $G'$ with vertices $a, b'\in G'$ such that $d_{G'}(a, b') = k'$ and $H\cong S(G',a,b')$.
\end{proposition}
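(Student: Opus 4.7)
The plan is to construct $G'$ from $G$ by attaching a pendant path at $b$, thereby lengthening every $a,b$-geodesic by a uniform amount without introducing any new or altered geodesic structure.

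More precisely, given $H = S(G,a,b)$ with $d_G(a,b) = k$ and a target $k' \geq k$, I would let $G'$ be the graph obtained from $G$ by adding new vertices $w_1, w_2, \ldots, w_{k'-k}$ (none in $V(G)$) together with edges $bw_1, w_1w_2, \ldots, w_{k'-k-1}w_{k'-k}$, and setting $b' = w_{k'-k}$ (where if $k'=k$, we simply take $G' = G$ and $b' = b$).

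The first key step is to verify $d_{G'}(a,b') = k'$. Since the new vertices $w_i$ each have all their $G'$-neighbors inside the appended path $b w_1 \cdots w_{k'-k}$, every $a,b'$-walk in $G'$ must cross from $V(G)$ to $\{w_1, \ldots, w_{k'-k}\}$ through $b$, so any $a,b'$-path decomposes as $Q_1 \circ Q_2$ where $Q_1$ is an $a,b$-path in $G$ and $Q_2 = bw_1 \cdots w_{k'-k}$. The minimum length is therefore $d_G(a,b) + (k'-k) = k'$, and the $a,b'$-geodesics in $G'$ are exactly the paths $P \circ Q_2$ where $P$ is an $a,b$-geodesic in $G$.

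The second step is to define the bijection $\varphi : V(S(G,a,b)) \to V(S(G',a,b'))$ by $\varphi(P) = P \circ Q_2$, and show that it preserves adjacency. Since any two $a,b'$-geodesics in $G'$ share the common suffix $Q_2$, they differ in exactly one vertex if and only if their $a,b$-geodesic prefixes differ in exactly one vertex, giving $\varphi(P) \varphi(P') \in E(S(G',a,b'))$ iff $PP' \in E(S(G,a,b))$. Hence $\varphi$ is a graph isomorphism and $H \cong S(G',a,b')$.

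The proof is essentially routine; the only point requiring care is the distance computation in $G'$, which is clean precisely because the new vertices have no edges to $G$ other than the single attachment at $b$, so no shortcuts are introduced. I do not anticipate any substantive obstacle.
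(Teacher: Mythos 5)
Your construction is exactly the one in the paper: append a pendant path of length $k'-k$ at $b$ ending in a new vertex $b'$, and observe that every $a,b'$-geodesic is an $a,b$-geodesic followed by this fixed tail. The paper simply asserts the isomorphism as clear, whereas you spell out the distance computation and the adjacency-preserving bijection, so your write-up is a correct (and slightly more detailed) version of the same argument.
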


\begin{proof} Suppose $H= S(G, a, b)$. Define $G'$ as follows:  $V(G')= V(G) \cup \{x_1, x_2, \dots x_{k'-k-1}, b'\}$, and
$E(H') = E(H)\cup \{ bx_1, x_1x_2, \dots  x_{k'-k-2} x_{k'-k-1},  x_{k'-k-1}b' \}$.  It is clear that $H\cong S(G', a, b')$.
\end{proof}

\section{Decompositions and Sums}\label{one-two_sum}

In the previous section  we  constructed a few special classes of shortest path graphs.  In the present section  we establish two   methods of obtaining new  shortest path graphs from old.  In particular,  we show that the family of shortest path graphs is closed under  disjoint unions and is  closed under  Cartesian products.

\begin{theorem}\label{disconnect}  
If  $H_1$ and $H_2$  are  shortest path graphs, then $H_1\cup H_2$ is a shortest path graph.
\end{theorem}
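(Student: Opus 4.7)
My plan is to construct a base graph for $H_1 \cup H_2$ by gluing base graphs for $H_1$ and $H_2$ at their common source and sink. Writing $H_i = S(G_i, a_i, b_i)$ with $k_i = d_{G_i}(a_i, b_i)$, I would first invoke Proposition~\ref{manybasegraphs} to replace each $G_i$ by a base graph realizing the common $a_i,b_i$-distance $k := \max(k_1, k_2, 3)$; ensuring $k \geq 3$ is essential, as explained below. I would then form $G$ by taking these two base graphs disjoint except for identifying $a_1$ with $a_2$ into a single vertex $a$, and identifying $b_1$ with $b_2$ into a single vertex $b$.

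The verification has two parts. First, I would argue that every $a,b$-geodesic in $G$ lies entirely within one of the two copies $G_i$: since $a$ and $b$ are the only vertices shared between the copies, any $a,b$-path that crossed from $G_1$ into $G_2$ would have to revisit $a$ or $b$, which is impossible in a path. Combined with the fact that both $G_i$ realize the same $a_i,b_i$-distance $k$, this gives $d_G(a,b) = k$ and $V(S(G,a,b)) = V(H_1) \cup V(H_2)$. Second, I would check adjacency: if two $a,b$-geodesics $U, W$ lie in the same copy $G_i$, their adjacency in $S(G,a,b)$ coincides with that in $H_i$; if they lie in different copies, they share only the endpoints $a, b$ and hence differ in all $k-1 \geq 2$ internal positions, so they are non-adjacent. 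Together these facts yield $S(G,a,b) \cong H_1 \cup H_2$.

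The main obstacle is the small-$k$ corner case. If $k = 2$, then a geodesic from $G_1$ and one from $G_2$ would share only the endpoints $a$ and $b$ but differ in their single internal vertex, producing a spurious edge across the two intended components of $S(G,a,b)$. Proposition~\ref{manybasegraphs} resolves this neatly: by inflating the distance to at least $3$ before gluing, we guarantee that cross-copy geodesics differ in enough positions to be forced apart, so that the only adjacencies retained are those internal to each $H_i$.
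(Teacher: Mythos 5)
Your proof is correct and follows essentially the same strategy as the paper's: equalize the two $a_i,b_i$-distances via Proposition~\ref{manybasegraphs}, glue the base graphs at a common source and sink, and observe that cross-copy geodesics differ in at least two positions. The only difference is that the paper adds two \emph{new} terminal vertices $a,b$ joined to $a_1,a_2$ and $b_1,b_2$ (which automatically makes cross-copy geodesics differ in the first and last internal positions, with no corner case), whereas you identify the old terminals directly and therefore need the inflation to $k\ge 3$ to kill the spurious matching when $d(a,b)=2$ --- a corner case you correctly spotted and handled.
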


\begin{proof}
By Proposition~\ref{manybasegraphs} we can choose  disjoint base graphs $G_i$ for $H_i$, $i\in \{1,2\}$, such
that $\{a_i,b_i\}\in G_i$, with $d_{G_1}(a_1,b_1) = d_{G_2}(a_2,b_2)$ 
 and with $H_i \cong S(G_i,a_i,b_i)$.
Construct a graph $G$ as follows. Let $V(G) = V(G_1) \cup V(G_2) \cup \{a, b\}$ and
 $E(G) = E(G_1) \cup E(G_2) \cup \{aa_1,aa_2, b_1b, b_2b\}$.
It is clear by the construction of $G$ that every $a,b$-geodesic corresponds to an $a_1,b_1$-geodesic through $G_1$ or an $a_2,b_2$-geodesic through $G_2$. In addition, if two shortest paths are adjacent in $S(G_i,a_i,b_i)$, $i \in \{1,2\}$, they are still adjacent in $S(G,a,b)$.
If $U_1$ and $U_2$ are $a,b$-geodesics in $G$ between $a$ and $b$ where $V(U_2)\cap V(G_1)\neq\varnothing $ and $V(U_1)\cap V(G_2)\neq\varnothing$,  then since $a_1, b_1\in V(U_1)$  and $a_2, b_2\in V(U_2)$ we have $U_1\not\sim U_2$. 
Thus the result holds.
\end{proof}

Proposition \ref{concat} concerns  the structure of the  subgraph of a shortest path graph $H\cong S(G,a,b)$ induced by all $a,b$-geodesics containing a given  vertex $v$. 

\begin{proposition}\label{concat}
 Let $G$ be a connected graph with $a, b\in V(G)$ and   $d=d(a,b) \ge 2$.   Let  $H=S(G,a,b)$  and let $v$ be a vertex of $G$ which is on at least one $a,b$-geodesic.  Let $H'$ be the subgraph of $H$  induced by all  vertices corresponding to $a,b$-geodesics which contain $v$.  Then
\[ H' \cong S(G, a, v)\square S(G, v,b).\]
Furthermore,   if  $G_1$  is any subgraph of   $G$  containing  all $a,v$-geodesics,   and   $G_2$  is  any subgraph of $G$  containing  all $v,b$-geodesics, then
$ H'$ is isomorphic to $S(G_1,a,v) \square S(G_2,v,b).$
\end{proposition}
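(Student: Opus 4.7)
The plan is to exhibit an explicit isomorphism by splitting each geodesic through $v$ at $v$. Define a map $\phi\colon V(H')\to V(S(G,a,v))\times V(S(G,v,b))$ by sending an $a,b$-geodesic $U$ containing $v$ to the pair $(U_a, U_b)$, where $U_a$ is the subpath of $U$ from $a$ to $v$ and $U_b$ is the subpath from $v$ to $b$. Since any subpath of a geodesic is a geodesic, $U_a$ is an $a,v$-geodesic and $U_b$ is a $v,b$-geodesic, so the map is well-defined.

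Next I would show $\phi$ is a bijection. Because $v$ lies on some $a,b$-geodesic in $G$, we have $d(a,v)+d(v,b) = d(a,b)$. So given any $a,v$-geodesic $P$ and any $v,b$-geodesic $Q$, the concatenation $P\circ Q$ is an $a,b$-walk of length $d(a,b)$; it must in fact be a path, since otherwise shortcutting a repeated vertex would yield an $a,b$-walk strictly shorter than $d(a,b)$. This gives an inverse to $\phi$, and thus a bijection onto $V(S(G,a,v))\times V(S(G,v,b))$.

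The core step is matching edges. Two $a,b$-geodesics $U,W$ through $v$ differ in exactly one vertex if and only if either (i) their $a,v$-parts differ in exactly one vertex and their $v,b$-parts are identical, or (ii) their $a,v$-parts are identical and their $v,b$-parts differ in exactly one vertex. The ``if'' direction is immediate from the concatenation structure, using that the differing vertex occurs at an index strictly less than $d(a,v)$ in case (i) and strictly greater in case (ii). For the ``only if'' direction, I would note that if both halves differed, the total number of differing vertices would be at least two; and a single differing vertex is either in the $a,v$-part or the $v,b$-part. This exactly matches the Cartesian product adjacency rule, so $\phi$ is a graph isomorphism $H'\cong S(G,a,v)\,\square\,S(G,v,b)$.

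For the ``furthermore'' statement, I would argue that the shortest path graph only depends on the collection of geodesics. If $G_1\subseteq G$ contains every $a,v$-geodesic of $G$, then $d_{G_1}(a,v) \le d_G(a,v)$ because at least one such geodesic sits in $G_1$, while $d_{G_1}(a,v) \ge d_G(a,v)$ because $G_1$ is a subgraph of $G$; hence the two distances are equal, so the $a,v$-geodesics of $G_1$ are precisely those of $G$, giving $S(G_1,a,v)\cong S(G,a,v)$. The same reasoning applies to $G_2$ and $S(G_2,v,b)$, so the second isomorphism follows from the first. I expect no serious obstacle here; the only point that requires care is the argument that $P\circ Q$ is a path (not just a walk) and that two geodesics through $v$ cannot differ in exactly one vertex via a change on each side.
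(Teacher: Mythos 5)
Your proposal is correct and follows essentially the same route as the paper's proof: the same bijection (splitting/concatenating geodesics at $v$) and the same case analysis showing that adjacency of concatenations corresponds exactly to the Cartesian product adjacency rule. You supply slightly more detail than the paper on two points it leaves implicit (that $P\circ Q$ is a path rather than merely a walk, and that the $a,v$-geodesics of $G_1$ coincide with those of $G$), but the argument is the same.
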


\begin{proof}
Let $H'$ be the subgraph of $H$ induced by all elements of $V(H)$ corresponding to $a,b$-geodesics in $G$ which contain the vertex $v$.
These $a,b$-geodesics are precisely the concatenations $T_{av}\circ T_{vb}$ where $T_{av}$ is an $a,v$-geodesic and $T_{vb}$ is
a $v,b$-geodesic.  Hence we speak interchangeably about the elements in the vertex set of $H'$ and geodesic paths in $G$ of the form
 $T_{av}\circ T_{vb}$.

By definition, the vertex set of $S(G,a,v) \,\square\, S(G,v,b)$ is the collection of ordered pairs $(T_{av},T_{vb})$ where $T_{av}$ is a vertex of $S(G,a,v)$, and
$T_{vb}$ is a vertex of $S(G,v,b)$.
It is clear that the mapping $f: V(H') \rightarrow V(S(G,a,v) \, \square \, S(G,v,b))$ given by $f(T_{av}\circ T_{vb}) = (T_{av},T_{vb})$, is a bijection.  We claim that this bijection is edge-preserving.  Indeed,
let $(U_1,R_1)\sim (U_2,R_2)$  in $S(G,a,v) \,\square\, S(G,v,b)$. Then by definition of Cartesian product,  either
(i) $U_1 \sim U_2$ in $S(G,a,v)$ and $R_1 = R_2$ or (ii) $U_1 = U_2$ and $R_1 \sim R_2$ in $S(G,v,b)$.
In the former case  $U_1$ and $U_2$ differ in exactly one index while $V(R_1)=V(R_2)$, so $U_1\circ R_1 \sim U_2\circ R_2$ in $H'$. 
An analogous argument holds in case (ii).
Now assume that $(U_1,R_1)$ is neither equal to nor adjacent to $(U_2,R_2)$  in $S(G,a,v) \,\square\, S(G,v,b)$.
Then one of the following occurs: $U_1=U_2$, in which case  $R_1$ and $R_2$ differ in at least two indices; $R_1=R_2$ in which case $U_1$ and $U_2$ differ in at least two indices;  or
$U_1\not= U_2$ and $R_1\not= R_2$ in which case $U_1$ and $R_1$ differ with $U_2$ and $R_2$ in at least one index respectively.  In each of these cases $R_1\circ U_1$ and $R_2\circ U_2$ differ in at least two indices and hence are not adjacent, as required.

To complete the proof, we simply note that $S(G,a,v)\cong S(G_1,a,v)$ and that $S(G,v,b)\cong S(G_2,v,b)$.  \end{proof}

For two graphs $G_1$ and $G_2$ with vertex sets such that $V(G_1)\cap V(G_2)=\{c\}$,  the {\em one-sum of $G_1$ and $G_2$ }is  defined to be the graph $G$ with vertex set $V(G_1)\cup V(G_2)$ and edge set $E(G_1)\cup E(G_2)$.  Theorem \ref{onesum} characterizes the shortest path graph of the one-sum of two graphs.

\begin{theorem}\label{onesum}
Let $G_1$ and $G_2$ be graphs with vertex sets such that $V(G_1)\cap V(G_2)=\{c\}$. Let $G$ be the one-sum of $G_1$ and $G_2$.
Then for any $a\in V(G_1)\setminus\{c\}$ and any $b\in V(G_2)\setminus\{c\}$,
$$S(G,a,b)\cong S(G_1,a,c)\,\square\, S(G_2,c,b).$$
\end{theorem}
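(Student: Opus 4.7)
The plan is to derive this theorem directly from Proposition \ref{concat} by choosing the cut vertex $v=c$, so most of the work is really just verifying that the hypotheses of Proposition \ref{concat} are met in a particularly clean way.

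First I would observe that since $V(G_1)\cap V(G_2)=\{c\}$, the vertex $c$ is a cut vertex of $G$ whose removal separates $V(G_1)\setminus\{c\}$ from $V(G_2)\setminus\{c\}$, and in particular separates $a$ from $b$. Therefore every $a,b$-walk in $G$, and in particular every $a,b$-geodesic, must pass through $c$. This immediately gives
\[ d_G(a,b) \;=\; d_{G_1}(a,c) + d_{G_2}(c,b), \]
and each $a,b$-geodesic $T$ of $G$ decomposes uniquely as $T = T_{ac}\circ T_{cb}$ where $T_{ac}$ is an $a,c$-geodesic and $T_{cb}$ is a $c,b$-geodesic. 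Moreover, since $c$ separates the two halves, $T_{ac}$ must lie entirely in $G_1$ and $T_{cb}$ entirely in $G_2$. In particular, the $a,c$-geodesics of $G$ coincide with the $a,c$-geodesics of $G_1$, and similarly for $c,b$-geodesics in $G_2$.

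Now I would apply Proposition \ref{concat} with $v=c$ and with the given subgraphs $G_1$ and $G_2$, which contain all $a,c$-geodesics and all $c,b$-geodesics of $G$ respectively. The proposition tells us that the subgraph $H'$ of $H=S(G,a,b)$ induced by vertices corresponding to $a,b$-geodesics of $G$ containing $c$ satisfies
\[ H' \;\cong\; S(G_1,a,c) \,\square\, S(G_2,c,b). \]
By the cut vertex argument above, \emph{every} $a,b$-geodesic in $G$ contains $c$, so $H' = H = S(G,a,b)$, completing the proof.

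The main obstacle here is really just bookkeeping rather than any substantive difficulty: one must make sure that the inherited geodesics in $G_1$ and $G_2$ are exactly the geodesics one would compute inside those subgraphs on their own (so that $S(G_1,a,c)$ and $S(G_2,c,b)$ are the right objects), which is immediate from the fact that $c$ is a cut vertex. Once that point is clear, Proposition \ref{concat} does all the heavy lifting.
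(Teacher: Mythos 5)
Your proof is correct and takes essentially the same approach as the paper: the paper also observes that $c$ is a cut vertex, so every $a,b$-geodesic contains $c$, and then invokes Proposition~\ref{concat} with $v=c$. Your additional bookkeeping about geodesics in $G_1$ and $G_2$ coinciding with those inherited from $G$ is a reasonable elaboration of what the paper leaves implicit.
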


\begin{proof}
Because $c$ is a cut-vertex, every $a,b$-geodesic in $G$ must contain $c$. 
The result now follows immediately from Proposition~\ref{concat}.
\end{proof}

\begin{corollary}\label{cartesian_closed}
Let  $H_1$ and $H_2$ be shortest path graphs.   Then $H_1 \square H_2$ is also a shortest path graph.
\end{corollary}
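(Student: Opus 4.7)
The plan is to deduce the corollary directly from Theorem \ref{onesum}, which already characterizes the shortest path graph of a one-sum as a Cartesian product of shortest path graphs. So if I can realize any prescribed Cartesian product $H_1 \square H_2$ via the one-sum construction, I am done.

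Concretely, I would start with base graphs witnessing $H_1$ and $H_2$: choose graphs $G_1$ and $G_2$ with distinguished vertices $a_1, b_1 \in V(G_1)$ and $a_2, b_2 \in V(G_2)$ such that $H_1 \cong S(G_1, a_1, b_1)$ and $H_2 \cong S(G_2, a_2, b_2)$. By relabeling vertices if necessary, I may assume $V(G_1) \cap V(G_2) = \{b_1\}$ with the identification $b_1 = a_2$; call this shared vertex $c$. Let $G$ be the one-sum of $G_1$ and $G_2$ at $c$. Since $a_1 \neq b_1 = c$ and $b_2 \neq a_2 = c$, the vertices $a_1 \in V(G_1) \setminus \{c\}$ and $b_2 \in V(G_2) \setminus \{c\}$ satisfy the hypotheses of Theorem \ref{onesum}. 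Applying the theorem yields
\[
S(G, a_1, b_2) \;\cong\; S(G_1, a_1, c) \,\square\, S(G_2, c, b_2) \;=\; S(G_1, a_1, b_1) \,\square\, S(G_2, a_2, b_2) \;\cong\; H_1 \,\square\, H_2,
\]
which exhibits $H_1 \square H_2$ as a shortest path graph.

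There is essentially no obstacle here beyond bookkeeping: the content is already packaged in Theorem \ref{onesum}, and the only thing one needs to verify is that the base graphs can be taken to meet in exactly one vertex (which is cosmetic, since vertex names can be chosen freely) and that the chosen endpoints $a_1$ and $b_2$ are distinct from the cut vertex $c$ (which is automatic because $a_1, b_1$ and $a_2, b_2$ are each pairs of distinct vertices in their respective base graphs). Thus the corollary follows as a direct specialization of the one-sum theorem.
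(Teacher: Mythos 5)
Your proposal is correct and is essentially identical to the paper's proof: the paper also identifies $b_1$ with $a_2$ to form a one-sum and invokes Theorem \ref{onesum}. The only cosmetic difference is that the paper takes $G_1$ and $G_2$ to be reduced, which is not actually needed since Theorem \ref{onesum} imposes no such hypothesis.
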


\begin{proof}
Let $H_1 = S(G_1,a_1,b_1)$ and $H_2 = S(G_2,a_2,b_2)$,  where $G_1$ and $G_2$ are reduced graphs. Identify $b_1$ with $a_2$ to obtain a graph $(G,a_1,b_2)$ for which $H_1 \square H_2$ is the shortest path graph.
\end{proof}

The construction of Theorem \ref{onesum} leads to  a family of graphs whose shortest path graphs are hypercubes.
Let $J_k$ be the graph formed by taking one-sums of $k$ copies of $C_4$ as follows. For $i=1,\ldots, k$ let $a_i$ and $b_i$ be antipodal vertices in the $i^{{\rm th}}$ copy of  $C_4$.  Form $J_k$  by identifying $b_i$ and $a_{i+1}$ for $i=1, \ldots , k-1$. See Figure \ref{hypercubes}.

\begin{corollary}\label{Cube}
For $J_k$ as defined above, $S(J_k,a_1,b_k)\cong Q_{k}$ where $Q_{k}$ is a hypercube of dimension $k$.
\end{corollary}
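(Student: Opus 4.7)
The plan is to prove this by induction on $k$, using Theorem~\ref{onesum} as the engine. The key observation is that $J_k$ is itself built from one-sums, so the shortest path graph decomposes as a Cartesian product at each step, and a Cartesian product of $k$ copies of $K_2$ is exactly $Q_k$.

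For the base case $k=1$, the graph $J_1$ is just $C_4$, with $a_1$ and $b_1$ antipodal. There are exactly two $a_1,b_1$-geodesics (the two length-$2$ paths around the cycle), and they differ in exactly one vertex, namely the unique internal vertex of each path. Therefore $S(C_4,a_1,b_1)\cong K_2\cong Q_1$.

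For the inductive step, assume $S(J_{k-1},a_1,b_{k-1})\cong Q_{k-1}$. Observe that $J_k$ is precisely the one-sum of $J_{k-1}$ and the $k$-th copy of $C_4$, glued at the single vertex $b_{k-1}=a_k$, which is the only vertex they share by construction. Applying Theorem~\ref{onesum} with this cut vertex gives
\[
S(J_k,a_1,b_k)\;\cong\;S(J_{k-1},a_1,b_{k-1})\,\square\,S(C_4,a_k,b_k)\;\cong\;Q_{k-1}\,\square\,K_2\;=\;Q_k,
\]
using the base-case computation applied to the $k$-th $C_4$ and the standard identity $Q_{k-1}\,\square\,K_2=Q_k$.

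There is no real obstacle here: the only things to check carefully are (i) that $J_{k-1}$ and the final $C_4$ share exactly the single vertex $b_{k-1}=a_k$, so that Theorem~\ref{onesum} applies verbatim, and (ii) that the base case is set up correctly. Both are immediate from the construction of $J_k$.
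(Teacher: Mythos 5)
Your proof is correct and follows essentially the same route as the paper: the paper's proof is also an induction on $k$ that invokes Corollary~\ref{cartesian_closed}, whose own proof is exactly the one-sum identification of $b_{k-1}$ with $a_k$ that you apply via Theorem~\ref{onesum}. Your write-up just makes the base case $S(C_4,a_1,b_1)\cong K_2\cong Q_1$ and the cut-vertex check explicit, which the paper leaves implicit.
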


\begin{figure}[htb]
\begin{center}
 \includegraphics[scale=1]{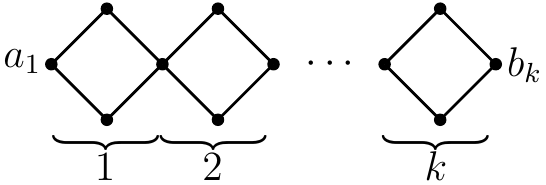}
\end{center}
\caption{Base graph $J_k$ whose shortest path graph is a hypercube.}\label{hypercubes}
\end{figure}

\begin{proof}
For $k=1$, $S(G,a_1,b_1)\cong P_1\cong Q_1$. The proof follows by induction on $k$ and from the statement and proof of  Corollary \ref{cartesian_closed}.
\end{proof}

The next result gives a decomposition of a shortest path graph into a disjoint set of one sums with additional edges.
Note that  the following theorem, holds for all  $1\le i< d(a,b)$. Hence, there are actually $d(a,b)- 1$  different decompositions of this sort.

\begin{theorem}\label{decomp}
Let $H$ be a shortest path graph with reduced   base graph $(G,a,b)$,  where $d(a,b)\ge 2$.
Fix an index $i$,  with $1\le i < d(a,b)$. 
Let $\{v_{i_1},v_{i_2}, \ldots , v_{i_k}\}$ be the set of $k$ vertices in $V(G)$ of distance $i$ from $a$, and let $E_i$ be the set of all edges $UW \in E(H)$ having difference index $i$. Then
\begin{itemize}
\item[(a)] the result of deleting the edges $E_i$ from $H$ yields a graph having $k$ disjoint components, each of which is a Cartesian product:

\[\displaystyle  H\setminus E_i = \bigcup_{j=1}^k  D_{i_j}, \]
where $D_{i_j}=S(G,a,v_{i_j})\square S(G,v_{i_j}, b)$ and

\item[(b)]
For any two subgraphs $D_{i_j}$ and $D_{i_\ell}$, the edges in $E_i$ between $V(D_{ij})$ and $V(D_{i \ell})$
form a partial matching.
\end{itemize}
\end{theorem}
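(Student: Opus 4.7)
The plan is to partition the vertex set of $H$ according to which distance-$i$ vertex from $a$ lies on the corresponding geodesic, and then identify the induced subgraphs with the Cartesian products promised by Proposition~\ref{concat}. Because every $a,b$-geodesic has exactly one vertex at distance $i$ from $a$ (namely the $i$-th interior vertex), the sets $V_j := \{U \in V(H) : v_{i_j} \in V(U)\}$ form a partition of $V(H)$. The partition will be the ``component skeleton'' of $H \setminus E_i$.

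First I would argue that deleting $E_i$ separates these parts and leaves each part intact. If $UW \in E(H)$ has difference index $r \neq i$, then $U$ and $W$ agree at position $i$, so they lie in the same $V_j$. Conversely, every edge within $V_j$ has the $i$-th vertex $v_{i_j}$ appearing in both endpoints, hence has difference index $\neq i$ and survives the deletion. Thus $H \setminus E_i$ is the disjoint union over $j$ of the subgraphs of $H$ induced by the $V_j$. Applying Proposition~\ref{concat} with $v = v_{i_j}$ identifies this induced subgraph with $S(G,a,v_{i_j}) \square S(G,v_{i_j},b) = D_{i_j}$, which yields part~(a). One small check is that the $D_{i_j}$ are really the \emph{components}, not just the pieces: since each $D_{i_j}$ is a Cartesian product of connected shortest path graphs over a reduced base (so each factor is connected), each $D_{i_j}$ is connected, and that confirms the decomposition into components.

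For part~(b), fix $j \neq \ell$ and consider an edge $UW \in E_i$ with $U \in V_j$ and $W \in V_\ell$. Since the difference index is $i$, the paths $U$ and $W$ agree in every position except position $i$, where $U$ has $v_{i_j}$ and $W$ has $v_{i_\ell}$. Now suppose $U$ has two neighbors $W_1, W_2 \in V_\ell$ joined to it by edges of $E_i$. Both $W_1$ and $W_2$ agree with $U$ off position $i$, and both have $v_{i_\ell}$ at position $i$, so $W_1 = W_2$. Thus every vertex of $V_j$ has at most one $E_i$-neighbor in $V_\ell$ (and symmetrically), which is exactly the partial matching claim.

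There is no serious obstacle here; the key conceptual point is that the ``index-$i$ slice'' behaves like a coordinate, so changing it moves us between components while fixing it confines us to a single concatenation-style Cartesian product. The only care required is in invoking Proposition~\ref{concat} on the reduced base graph $G$ to get the Cartesian product structure, and in verifying that the partition by $v_{i_j}$ matches both the component decomposition of $H \setminus E_i$ and the endpoints of edges in $E_i$.
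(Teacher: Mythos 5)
Your argument follows essentially the same route as the paper's: partition $V(H)$ according to which distance-$i$ vertex the corresponding geodesic passes through, invoke Proposition~\ref{concat} to identify each part with $S(G,a,v_{i_j})\,\square\, S(G,v_{i_j},b)$, observe that every edge joining distinct parts must have difference index $i$ (and every edge inside a part does not), and for (b) note that a vertex of $D_{i_j}$ has at most one neighbour whose path agrees with it off position $i$ and has $v_{i_\ell}$ at position $i$. All of this is correct and is exactly the paper's proof.

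The one step that fails is your ``small check'' that each $D_{i_j}$ is a connected component. You justify it by asserting that shortest path graphs over a reduced base are connected; that is false. Proposition~\ref{emptyGraph} already exhibits $\overline{K_t}$ as the shortest path graph of a reduced graph, and the same phenomenon occurs for the factors here: take two internally disjoint $a,v$-paths of length $3$ followed by a single $v,b$-path, plus one further $a,b$-geodesic avoiding $v$ entirely (so no edge lies on all $a,b$-geodesics and $(G,a,b)$ remains reduced); then $S(G,a,v)\cong \overline{K_2}$ and $D_{v}$ is disconnected. The paper neither proves nor uses connectivity --- ``components'' in the statement should be read as the $k$ vertex-disjoint subgraphs into which $H\setminus E_i$ decomposes, which is all that your argument (and the paper's) actually establishes. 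Deleting that one sentence leaves a correct proof that coincides with the published one.
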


\begin{proof}  For each $j\in\{1, 2, \ldots, k\}$,  it follows from  Proposition~\ref{concat}  that the set of vertices in $H$  corresponding to $a,b$-geodesics containing $v_{i_j}$   induce a subgraph isomorphic to
$S(G,a,v_{i_j})\square S(G,v_{i_j},b)$.   Since each $a,b$-geodesic in $G$ contains precisely one vertex $v_{i_j} \in \{v_{i_1}, \ldots ,v_{i_k}\}$,  these $k$ induced subgraphs  of $H$ are vertex disjoint.  Furthermore,  any pair $U$, $W$ of adjacent vertices in $H$ whose corresponding $a,b$-geodesics contain distinct vertices in $\{v_{i_1}, \ldots ,v_{i_k}\}$,  must differ in index $i$.  We conclude that $UW$ has difference index $i$ and is in $E_i$. This establishes part (a).

Each vertex $U$ in $D_{i_j}$  corresponds to a path with vertex $v_{i_j}$ at the $i^{\rm th}$ index, and each vertex $W$ in $D_{i_\ell}$ corresponds to a path with vertex $v_{i_\ell}$ at the $i^{\rm th}$ index.
Thus for each vertex $U$ in $D_{i_j}$ there is at most one vertex in $D_{i_\ell}$ adjacent to $U$. 
\end{proof}

Note that $4$-cycles occur very often in Cartesian products: Take any edge $UW$  in $H_1$  and any edge $XY$  in $ H_2$.   Then the set of vertices $\{ (U,X), (U,Y), (W,X), (W,Y)\}$  induces a $4$-cycle in $H_1 \square H_2$.
From Theorem \ref{decomp}, part (a), and the fact that  4-cycles are ubiquitous in Cartesian products of  graphs, we conclude that
\begin{obs}\label{prevalent}$4$-cycles are prevalent in shortest path graphs.
\end{obs}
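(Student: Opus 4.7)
The plan is to justify this observation as a direct consequence of Theorem~\ref{decomp}(a) together with the standard fact about $4$-cycles in Cartesian products. Since ``prevalent'' is not formally defined, I would first fix a precise interpretation: namely, that whenever $H = S(G,a,b)$ is nontrivial in a suitable sense, a positive fraction (indeed, typically all but a degenerate subset) of its edges lie on induced $4$-cycles.

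First, I would invoke Theorem~\ref{decomp}(a) for each admissible index $i$, giving
\[ H \setminus E_i \;=\; \bigcup_{j=1}^{k} D_{i_j}, \qquad D_{i_j} \cong S(G,a,v_{i_j}) \,\square\, S(G,v_{i_j},b). \]
Any edge $UW \in E(H)$ has some difference index $i_0$, and the two endpoints $U$ and $W$ lie in a common component $D_{i_\ell}$ of $H \setminus E_{i}$ for every $i \neq i_0$. So to find a $4$-cycle through $UW$ it suffices to exhibit a $4$-cycle through the corresponding edge inside one such Cartesian product $D_{i_\ell}$.

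Next I would apply the Cartesian product observation recorded immediately before the statement: if $H_1 \,\square\, H_2$ has an edge of the form $(X,U)(Y,U)$ coming from $XY \in E(H_1)$, then for any edge $UV \in E(H_2)$ the four vertices $(X,U), (Y,U), (Y,V), (X,V)$ induce a $4$-cycle. Translating this back through the isomorphism $D_{i_\ell} \cong S(G,a,v_{i_\ell}) \,\square\, S(G,v_{i_\ell},b)$, an edge of $H$ with difference index $i_0 < i$ sits in a $4$-cycle of $D_{i_\ell}$ provided $S(G,v_{i_\ell},b)$ has at least one edge; and symmetrically for $i_0 > i$ provided $S(G,a,v_{i_\ell})$ has at least one edge. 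Choosing $i$ appropriately relative to $i_0$ then shows that $UW$ lies in a $4$-cycle of $H$ as soon as some $a,b$-geodesic through one of $U, W$ admits a local replacement at an index different from $i_0$.

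The main (and essentially only) obstacle is formalization rather than mathematics: one must decide what ``prevalent'' should mean and then verify that the degenerate edges excluded by the argument above are genuinely rare. For instance, the argument fails only for edges $UW$ whose endpoints have neighbors of one single difference index; these are precisely the edges in components of $H$ that reduce to a path-like structure on one side of $v_{i_\ell}$, and by the Cartesian product decomposition such edges can be isolated and bounded. Any of several precise formulations — ``every edge of $H$ lies in a $4$-cycle unless both endpoints have monochromatic difference-index neighborhoods,'' or ``the number of $4$-cycles grows multiplicatively with the number of edges at each geodesic level'' — follows by this same route.
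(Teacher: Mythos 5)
Your proposal is correct and takes essentially the same route as the paper: the paper justifies this informal observation precisely by combining Theorem~\ref{decomp}(a) with the remark that any edge of $H_1$ together with any edge of $H_2$ yields an induced $4$-cycle in $H_1\,\square\,H_2$. Your extra care in fixing a precise meaning of ``prevalent'' and isolating the degenerate edges goes beyond what the paper attempts (it offers no formal proof at all), but the underlying argument is the same.
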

In view of the fact that Theorem \ref{decomp} holds for every  index $i$, Observation~\ref{prevalent} is especially strong:  we expect shortest path graphs having no $4$-cycles to have a relatively  simple structure, and we predict  the study of shortest path graphs with no such restriction to be more challenging.

We conclude this section with another way to combine base graphs. Let $G_1$ and $G_2$ be graphs with edge sets such that $E(G_1)\cap E(G_2) = \{e\}$,  where $e=xy$,  and $V(G_1)\cap V(G_2) = \{x,y\}$. The {\em two-sum of $G_1$ and $G_2$} is defined to be  the graph $G$ with vertex set $V(G_1)\cup V(G_2)$ and edge set $E(G_1)\cup E(G_2)$.  Theorem~\ref{twosum} characterizes the shortest path graph of the two-sum of two graphs.

\begin{theorem}\label{twosum} Let $G_1$ and $G_2$ be graphs with edge sets such that $E(G_1)\cap E(G_2) = \{e\}$,  where $e=xy$,  and $V(G_1)\cap V(G_2) = \{x,y\}$.
Let $G$ be the  two-sum of $G_1$ and $G_2$.
Let $a\in V(G_1)$ and $b\in V(G_2)$ where $\{a,b\} \cap \{x,y\} = \varnothing$.  Then $S(G, a,b)$ is isomorphic to one of the following:
\begin{itemize}
\item[$(i)$] the disjoint union $S(G_1,a,x)\,\square\, S(G_2,x,b)\, \bigcup \, S(G_1,a,y)\,\square\, S(G_2,y,b)$ plus additional edges which comprise a matching between the two,
in the case that $d(a,x)=d(a,y)$ and $d(x,b)=d(y,b)$;
\item[$(ii)$] $S(G_1,a,x)\,\square\, S(G_2,x,b)$, in the case that $d(a,x)\le d(a,y)$ and $d(x,b) < d(y,b)$, \\   or $d(a,x)<d(a,y)$ and $d(x,b) \le d(y,b)$;
\item[$(iii)$] $S(G_1,a,y)\,\square\, S(G_2,y,b)$, in the case that  $d(a,y)\le d(a,x)$ and $d(y,b) < d(x,b)$, \\  or $d(a,y)<d(a,x)$ and $d(y,b) \le d(x,b)$;
\item[$(iv)$] and otherwise $S(G_1,a,x)\,\square\, S(G_2,x,b)\, \bigcup \,S(G_1,a,y)\,\square\, S(G_2,y,b)$,  where vertices common to \\$S(G_1,a,x)\,\square\, S(G_2,x,b)$ and
$S(G_1,a,y)\,\square\, S(G_2,y,b)$  correspond precisely to $a,b$-geodesics containing the edge $e$.
\end{itemize}
\end{theorem}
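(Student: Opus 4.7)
The plan is to exploit the fact that $\{x,y\}$ separates $a$ from $b$ in $G$ (since $V(G_1)\cap V(G_2)=\{x,y\}$ and $a,b$ lie on opposite sides), so every $a,b$-geodesic must contain $x$, $y$, or both. I will classify $a,b$-geodesics by their intersection with $\{x,y\}$ into four types: \emph{Type X} (through $x$ but not $y$), \emph{Type Y} (through $y$ but not $x$), \emph{Type XY} (through both, traversing $e$ as $x\to y$), and \emph{Type YX} (through both, traversing $e$ as $y\to x$). Any geodesic containing both $x$ and $y$ must have them consecutive along $e$, since otherwise the subpath between them could be shortened using $e$. Writing $p=d(a,x)$, $p'=d(a,y)$, $q=d(x,b)$, $q'=d(y,b)$, the lengths of the four types are $p+q$, $p'+q'$, $p+1+q'$, and $p'+1+q$. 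The triangle inequality via $e$ yields $|p-p'|\le 1$ and $|q-q'|\le 1$. A short separate argument shows that every $a,x$-geodesic of $G$ lies in $G_1$ (any shortcut into $G_2$ must enter via $y$ and return via the edge $e$, which is already in $G_1$), so by the second statement of Proposition~\ref{concat} I may identify $S(G,a,x)$ with $S(G_1,a,x)$, and similarly for the other three endpoints.

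For each case I will (a) compare the four lengths to decide which types actually contribute geodesics, (b) apply Proposition~\ref{concat} at $x$ and/or $y$ to identify the subgraph induced by geodesics through $x$ (resp.\ $y$) with $S(G_1,a,x)\,\square\,S(G_2,x,b)$ (resp.\ through $y$), and (c) track difference indices, guided by Observation~\ref{fundObs}, to analyze edges between the parts. In case (i) the hypotheses make Types XY and YX strictly longer than Types X and Y, so only X and Y occur; the two induced subgraphs are vertex-disjoint, and a cross-edge must have difference index $p$ with every other position matching, pinning down at most one Type Y neighbor per Type X vertex and hence a matching. Cases (ii) and (iii) follow immediately from the length comparisons: the given strict inequality forces Type X (resp.\ Type Y) to be the unique minimum, so all geodesics lie in a single Cartesian product.

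The main work, and the main obstacle, is case (iv). Combining the negated hypotheses of (i)--(iii) with $|p-p'|\le 1$ and $|q-q'|\le 1$ forces, up to swapping $x$ and $y$, that $p'=p+1$ and $q=q'+1$; then Types X, Y, and XY all have length $p+q'+1=d(a,b)$ while Type YX is strictly longer. Proposition~\ref{concat} then identifies the subgraph on Types~X$\,\cup\,$XY with $S(G_1,a,x)\,\square\,S(G_2,x,b)$ and symmetrically for $y$, and the common vertices of these two Cartesian products are precisely the Type XY geodesics---equivalently, the $a,b$-geodesics containing $e$. To close the case I must verify that no extra edge of $S(G,a,b)$ connects a pure Type X vertex to a pure Type Y vertex: since $p'=p+1\neq p$, such a pair has $x$ at position $p$ in the first path (and some non-$y$ vertex there in the second), and $y$ at position $p+1$ in the second (and some non-$x$ vertex there in the first), so they differ in at least two indices and are non-adjacent. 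All remaining adjacencies involving a Type XY geodesic are already captured by one of the two Cartesian products, completing the description.
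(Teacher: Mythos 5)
Your proof is correct and follows essentially the same route as the paper's: classify the $a,b$-geodesics by their intersection with the separator $\{x,y\}$, apply Proposition~\ref{concat} at $x$ and at $y$, and rule out cross-adjacencies by comparing difference indices (the paper packages case $(i)$ via Theorem~\ref{decomp} and cases $(ii)$--$(iii)$ via Theorem~\ref{onesum}, but the underlying argument is the same). Your four-type bookkeeping, the length arithmetic showing ``otherwise'' forces $d(a,y)=d(a,x)+1$ and $d(x,b)=d(y,b)+1$ up to swapping $x$ and $y$, and the explicit check that $a,x$-geodesics of $G$ stay inside $G_1$ are welcome extra detail rather than a different approach.
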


\begin{proof}
Note that every $a,b$-geodesic has non-empty intersection with  $\{x,y\}$.

\noindent \textbf{Case (i)}  Suppose,  $i=d(a,x)=d(a,y)$ and $d(x,b)=d(y,b)$.  In this case, the vertices $x$ and $y$  are the only vertices at distance $i$ from $a$ in $G$ to be used in any $a,b$-geodesic.
Let $E_i$ be the set of all edges in $S(G,a,b)$ having difference index $i$. If we note  that
$S(G,a,x)$,  $S(G,a,y)$, $S(G,x,b)$ and $S(G,y,b)$ are, respectively,  isomorphic to
$S(G_1,a,x)$,  $S(G_1,a,y)$, $S(G_2,x,b)$ and $S(G_2,y,b)$,  then it follows immediately from Theorem \ref{decomp}, part (a),  that
\[ S(G,a,b)/E_i \cong  S(G_1,a,x)\,\square\, S(G_2,x,b)\, \bigcup \, S(G_1,a,y)\,\square\, S(G_2,y,b).\]
From part (b) of that same theorem,  it follows directly that the edges connecting  the vertex disjoint components $S(G_1,a,x)\,\square\, S(G_2,x,b)$ and $S(G,a,y)\,\square\, S(G,y,b)$  form a matching. This completes the proof for Case 1.

\medskip

\noindent \textbf{Case (ii)} Either $d(a,x)\le d(a,y)$ and $d(x,b) < d(y,b)$,    or $d(a,x)<d(a,y)$ and $d(x,b) \le d(y,b)$.
Every $a,b$-geodesic in $G$ contains the vertex $x$, and the result follows directly from Theorem \ref{onesum}.

\medskip

\noindent \textbf{Case (iii)} Either $d(a,y)\le d(a,x)$ and $d(y,b) < d(x,b)$,   or $d(a,y)<d(a,x)$ and $d(y,b) \le d(x,b)$.
Every $a,b$-geodesic in $G$ contains the vertex $y$, and the result follows directly from Theorem \ref{onesum}.
\medskip

\noindent \textbf{Case (iv)} Consider first  when  $d(a,x) > d(a,y)$  and $d(x,b) < d(y,b)$.  Since $d(x,y)=1$,  we have that $d(a,x)=d(a,y)+1$ and $d(x,b)+1=d(y,b)$.
By Proposition \ref{concat},  the vertices of $S(G,a,b)$ which correspond to paths containing $x$ induce a subgraph isomorphic to $S(G_1,a,x)\square S(G_2, x,b)$,   and those which correspond to paths containing $y$ induce a subgraph isomorphic to $S(G_1,a,y)\square S(G_2, y,b)$.
Note that some $a,b$-geodesics contain the edge $e=xy$,  and hence the two induced subgraphs described above  have non-empty intersection.  Now let  $U$ be a vertex in $V(S(G,a,b))$  which corresponds to an $a,b$-geodesic containing $x$ and not $y$,  and let
$W$ be a vertex in $V(S(G,a,b))$  which corresponds to an $a,b$-geodesic containing $y$ but not $x$.   Then $U$ and $W$ differ in both index  $d(a,y)$ and index  $d(a,y)+1$ and hence are non-adjacent.
The case $d(a,x)< d(a,y)$ and $d(x,b) > d(y,b)$ is handled analogously. 
This completes the proof.
\end{proof}

  Note that in the proof of Theorem \ref{twosum},  the edge $e$ is used only in Case (iv).  Hence, if Case~(i),  (ii), or (iii) holds in the statement of that theorem,  then $S(G\setminus e,a,b) \cong S(G,a,b)$.
Also note that results similar to Theorem \ref{twosum} can be obtained by considering joining two graphs at two vertices  with no edges between the pair; or indeed joining graphs on more than two vertices.

\section{Shortest path graphs of girth at least $5$}\label{girth5}

In this section, we completely classify all shortest path graphs with girth $5$ or greater.  In the process,  we characterize precisely which cycles are shortest path graphs and we show that the claw is not a shortest path graph.
The following simple observation will be crucial.

\begin{proposition}\label{p3toc4}
Let $H$ be a shortest path graph.
Let $U_1,U_2,U_3$ be distinct vertices in $H$ such that $U_1U_2U_3$ is an induced path.
If the difference indices of $U_1U_2$ and $U_2U_3$ are $i$ and $j$, respectively, where $j\not\in\{i-1,i,i+1\}$,
then $H$ has an induced $C_4$ containing $U_1U_2U_3$.
\end{proposition}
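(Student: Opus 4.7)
The plan is to build a fourth vertex $U_4$ explicitly in the base graph and then verify that $U_1U_2U_3U_4$ is the desired induced $4$-cycle. Write $H = S(G,a,b)$ and let the path in $G$ corresponding to $U_2$ be
\[
a = v_0 v_1 \cdots v_d v_{d+1} = b,
\]
where $d = d_G(a,b)$. Since $U_1U_2$ has difference index $i$, the path for $U_1$ is obtained from $U_2$ by replacing $v_i$ with some vertex $v_i' \neq v_i$; likewise the path for $U_3$ replaces $v_j$ with some $v_j' \neq v_j$. Without loss of generality $i < j$, and by hypothesis $j \geq i + 2$.

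Let $U_4$ be the vertex sequence obtained from $U_2$ by performing both substitutions simultaneously:
\[
U_4 : \; v_0 \cdots v_{i-1}\, v_i'\, v_{i+1} \cdots v_{j-1}\, v_j'\, v_{j+1} \cdots v_{d+1}.
\]
The first step is to verify that $U_4$ is an $a,b$-geodesic in $G$. Because $j \geq i+2$, the edges required near position $i$, namely $v_{i-1}v_i'$ and $v_i'v_{i+1}$, are precisely the edges witnessed by $U_1$, and the edges required near position $j$, namely $v_{j-1}v_j'$ and $v_j'v_{j+1}$, are those witnessed by $U_3$; no edge involves both new vertices. To check that no vertex is repeated, note that $v_i'$ is distinct from every $v_k$ with $k \neq i$ because $U_1$ is a simple path, and similarly $v_j'$ is distinct from every $v_k$ with $k \neq j$; finally $v_i' \neq v_j'$ since as endpoints of geodesics from $a$ of lengths $i$ and $j$ respectively they sit at different distances from $a$. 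Thus $U_4$ is a simple $a,b$-path of length $d$, hence a vertex of $H$.

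Next I would verify the adjacencies. The paths $U_4$ and $U_1$ agree everywhere except at index $j$ (where $U_1$ has $v_j$ and $U_4$ has $v_j'$), so $U_4 \sim U_1$; symmetrically $U_4 \sim U_3$. The paths $U_4$ and $U_2$ differ at the two indices $i$ and $j$, so $U_4 \not\sim U_2$. Moreover $U_1 \not\sim U_3$ since $U_1U_2U_3$ was assumed to be an induced path. Finally, $U_4$ is distinct from each of $U_1, U_2, U_3$: from $U_2$ because they disagree at index $i$; from $U_1$ because $v_j' \neq v_j$; from $U_3$ because $v_i' \neq v_i$. Therefore $\{U_1, U_2, U_3, U_4\}$ induces a $4$-cycle in $H$ containing the path $U_1U_2U_3$.

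The only step requiring any care is checking that $U_4$ is a simple path; everything else is bookkeeping about difference indices. The key leverage is the gap hypothesis $|i-j|\ge 2$, which decouples the two substitutions so that the edge conditions witnessed separately by $U_1$ and $U_3$ combine without interference.
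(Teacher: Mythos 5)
Your proof is correct and takes essentially the same route as the paper: both complete the square by constructing the unique fourth geodesic that combines the two substitutions at indices $i$ and $j$ (the paper describes the same fourth corner relative to $U_1$ rather than $U_2$), and you merely spell out in more detail why $U_4$ is a genuine geodesic and why the resulting $C_4$ is induced. The only blemish is cosmetic: writing the geodesic as $a = v_0 v_1 \cdots v_d v_{d+1} = b$ gives a path of length $d+1$ rather than $d = d_G(a,b)$.
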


\begin{proof}
Let $U_1 = av_1 \ldots v_pb$, $U_2 = av_1\ldots v_i'\ldots v_pb$, and $U_3 = av_1 \ldots v_{i-1}v_i'v_{i+1}\ldots v_j'\ldots v_p b$. 
Then there is a shortest path $U_4=av_1 \ldots v_j'\ldots v_p b$ in $G$, creating the $4$-cycle $(U_1,U_2,U_3,U_4)$.
\end{proof}

The next result says that any shortest path graph containing an induced odd cycle larger than a $3$-cycle must necessarily contain an induced $C_4$. Theorem~\ref{noClaw}  establishes the same result for induced claws.

\begin{lemma}\label{c4induced}
Let $H$ be a shortest path graph that contains an induced $C_k$ for odd $k > 3$. Then $H$ contains an induced $C_4$.
\end{lemma}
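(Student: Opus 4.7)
The plan is to walk around the induced odd cycle $C_k$ in $H$ and track the difference index of each edge. Suppose the cycle is $U_1 U_2 \cdots U_k U_1$, and let $i_j$ denote the difference index of the edge $U_j U_{j+1}$ (indices taken mod $k$). Since every three consecutive vertices on the cycle form an induced path in $H$, Observation~\ref{fundObs} yields $i_j \ne i_{j+1}$ for all $j$.

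Next, apply Proposition~\ref{p3toc4} to each consecutive pair of edges. If there is any index $j$ such that $|i_j - i_{j+1}| \ge 2$, then the induced path $U_j U_{j+1} U_{j+2}$ already lies in an induced $C_4$ of $H$, and we are done. So the remaining case is that $|i_{j+1} - i_j| = 1$ for every $j$ around the cycle.

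In that case, $i_{j+1} - i_j \in \{+1, -1\}$ for all $j$. Walking once around the cycle, the telescoping sum
\[ \sum_{j=1}^{k} (i_{j+1} - i_j) = 0 \]
must hold, since we return to the starting edge. But a sum of $k$ terms, each $\pm 1$, can equal zero only when the numbers of $+1$'s and $-1$'s are equal, forcing $k$ to be even. This contradicts $k$ being odd, so this case cannot actually occur, and the previous case produces the required induced $C_4$.

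No step looks difficult; the only thing to take care of is to verify that the hypothesis of Proposition~\ref{p3toc4} is correctly triggered (i.e., that $U_j U_{j+1} U_{j+2}$ is genuinely an induced path in $H$, which follows from $C_k$ being induced), and to be careful with the cyclic bookkeeping so that the telescoping identity is valid. The main conceptual content is the parity argument on the alternating $\pm 1$ steps of the difference-index sequence.
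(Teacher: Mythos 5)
Your proof is correct and follows essentially the same route as the paper: both reduce to the case where consecutive difference indices differ by exactly one (via Proposition~\ref{p3toc4}) and then derive a contradiction from the parity of $k$, which the paper phrases as the parity of the difference index alternating around the cycle and you phrase as a telescoping sum of $\pm 1$ steps. The two formulations are interchangeable.
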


\begin{proof}
Let $(U_1, \ldots, U_k)$ be an induced $C_k$ with odd $k>3$ in $H$, and suppose that $H$ does not contain an induced $C_4$.
Let $i$ be the difference index of $U_1U_2$.
By Proposition~\ref{p3toc4}, the difference index of $U_2U_3$ is either $i-1$ or $i+1$.
In particular, if $i$ is odd then $U_2$ and $U_3$ differ at an even index, and if $i$ is even then $U_2$ and $U_3$ differ at an odd index. The same is true at every step, that is,  the parity  of the difference index  alternates around the cycle. This is impossible if $k$ is odd.
\end{proof}

In contrast, $C_3$ and every even cycle are shortest path graphs.

\begin{theorem}\label{ck}
$C_k$ is a shortest path graph, if and only if $k$ is even or $k=3$.
\end{theorem}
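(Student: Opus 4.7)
The plan is to handle the equivalence in both directions, with the forward direction following quickly from Lemma~\ref{c4induced} and the reverse requiring an explicit construction in the even case.

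For the forward direction, I would argue by contradiction: suppose $C_k$ is a shortest path graph where $k > 3$ is odd. Since $C_k$ trivially contains itself as an induced cycle (of length $k>3$, odd), Lemma~\ref{c4induced} forces $C_k$ to contain an induced $C_4$. But for $k \ge 5$, the graph $C_k$ has no $4$-cycle at all (every cycle has length exactly $k$), a contradiction.

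For the reverse direction I split into two cases. When $k=3$, we have $C_3 = K_3$, which is a shortest path graph by the preceding lemma giving $S(K_{2,3},a,b) \cong K_3$. For $k = 2m$ with $m \ge 2$, I would exhibit an explicit base graph $G_m$ with
\[ V(G_m) = \{a,b\} \cup \{x_0, \ldots, x_{m-1}\} \cup \{y_0, \ldots, y_{m-1}\}, \]
and edge set consisting of $ax_i$ and $y_i b$ for each $0 \le i \le m-1$, together with the edges of a bipartite $2m$-cycle
\[ x_0\, y_0\, x_1\, y_1 \cdots x_{m-1}\, y_{m-1}\, x_0 \]
on $X \cup Y$. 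Because $a$ has neighborhood $X$ and $b$ has neighborhood $Y$, with $X \cap Y = \varnothing$, one checks $d_{G_m}(a,b) = 3$, and the $a,b$-geodesics are precisely the length-$3$ paths $a x_i y_j b$ where $x_i y_j$ is one of the $2m$ edges of the bipartite cycle. Two such paths are adjacent in $S(G_m,a,b)$ iff they differ in exactly one interior vertex, iff the corresponding bipartite edges share an endpoint. Hence $S(G_m,a,b)$ is the line graph of $C_{2m}$, which is $C_{2m}$.

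The only place requiring any real care is verifying that the construction yields exactly $C_{2m}$ in the even case: one must confirm that $d(a,b)=3$ rules out any shortcut through $X \cup Y$, and that no extra adjacencies appear between shortest paths whose corresponding bipartite edges are non-incident (which is immediate since they then differ in both interior vertices). Once this is done, the theorem follows.
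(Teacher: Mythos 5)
Your proof is correct and follows essentially the same route as the paper: the odd case via Lemma~\ref{c4induced}, and for even $k=2m$ a base graph that is, up to relabeling, identical to the paper's (your bipartite $2m$-cycle between $N(a)$ and $N(b)$ matches the paper's edges $v_iv_i'$, $v_iv_{i+1}'$). Your observation that the resulting shortest path graph is the line graph of $C_{2m}$ is a pleasant repackaging of the same verification.
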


\begin{proof}
We have already seen that $C_3$ and $C_4$ are shortest path graphs, see Figure~\ref{moreExamples} and Corollary \ref{Cube}. From Lemma~\ref{c4induced}, it follows that $C_k$ is not a shortest path graph for odd $k > 3$.

We now construct a graph whose shortest path graph is $C_{2n}$. Define $G$  with  $2n+2$ vertices namely $ V(G)  = \{a,b,v_0,v_{1},\ldots,v_{n-1},v_{0}',v_{1}',\ldots,v_{n-1}'\}$
and edge set
\[
E(G)=\{av_{i}\,:\,i\in \mathbb{Z}_{n}\}\cup \{bv_{i}'\,:\,i\in\mathbb{Z}_{n}\} \cup \{v_{i}v_{i}': i \in \mathbb{Z}_{n}\} \cup \{v_{i}v_{i+1}'\,:\, i\in \mathbb{Z}_{n}\},
\]
where indices are calculated modulo $n$.
There are exactly $2n$ $a,b$-geodesics of $G$, namely the set $\{av_iv_i'b,av_iv'_{i+1}b\, :\, i=0,\ldots,n-1\}$. It is easy to check that the shortest path graph $S(G,a,b)=C_{2n}$. \\
\end{proof}

\begin{theorem}\label{noClaw}
If a shortest path graph $H$ has an induced claw, $K_{1,3}$, then $H$ must have a $4$-cycle containing two edges of the induced claw. In particular,
$K_{1,3}$ is not a shortest path graph.
\end{theorem}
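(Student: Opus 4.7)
The plan is to identify the difference indices of the three edges of the claw and argue that some pair of them is too far apart to avoid creating a $C_4$ via Proposition~\ref{p3toc4}. Concretely, let $U_0$ be the center of an induced claw in $H = S(G,a,b)$ with leaves $U_1, U_2, U_3$, and for each $k \in \{1,2,3\}$ let $i_k$ denote the difference index of the edge $U_0U_k$. Because the claw is induced, the leaves are pairwise non-adjacent, so each two-edge path $U_jU_0U_k$ (with $j \neq k$) is induced in $H$. Observation~\ref{fundObs} then forces $i_j \neq i_k$ for every such pair, so $i_1, i_2, i_3$ are three distinct integers.

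Relabel the leaves so that $i_1 < i_2 < i_3$. Since three distinct integers cannot all lie in a window of width $1$, we have $i_3 - i_1 \geq 2$, meaning $i_3 \notin \{i_1-1, i_1, i_1+1\}$. Applying Proposition~\ref{p3toc4} to the induced path $U_1U_0U_3$, we obtain an induced $4$-cycle of $H$ containing this path, and hence containing the two claw-edges $U_0U_1$ and $U_0U_3$. This proves the first assertion.

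For the ``in particular'' statement, observe that $K_{1,3}$ is itself an induced claw inside itself, yet $K_{1,3}$ is acyclic and in particular contains no $4$-cycle. If $K_{1,3}$ were a shortest path graph, the first part of the theorem would produce a $4$-cycle in $K_{1,3}$ containing two edges of the claw, a contradiction. Hence $K_{1,3}$ is not a shortest path graph.

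There is no substantial obstacle here: the whole argument rests on the pigeonhole observation that three distinct integers must have a spread of at least $2$, which immediately activates the hypothesis of Proposition~\ref{p3toc4}. The only subtlety worth stating explicitly is that, because the claw is assumed \emph{induced}, one is entitled to conclude each two-edge subpath through $U_0$ is induced (and hence that Observation~\ref{fundObs} applies to force distinct difference indices).
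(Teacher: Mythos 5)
Your proof is correct and follows essentially the same route as the paper's: both rest on Proposition~\ref{p3toc4} together with the fact that three distinct difference indices cannot all lie within a window of width one. Your direct pigeonhole phrasing (order the indices and apply the proposition once to the extremes) is a slightly cleaner packaging of the paper's contradiction argument, which instead applies the proposition's contrapositive twice to pin $i_2$ and $i_3$ to $i_1\pm 1$ before deriving the forced $4$-cycle.
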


\begin{proof}
Let $H$ be a shortest path graph that contains an induced claw with vertices $U_0,U_1,U_2,U_3$,
such that $U_0$ is adjacent to $U_1, U_2$, and $U_3$.  Let  $i_j$ be the difference index of
$U_0U_j$ for $j\in\{1, 2, 3\}$.  Since the claw is induced, these difference indices must be distinct.
Suppose that no three vertices of the claw are part of an induced $4$-cycle.
By Proposition \ref{p3toc4}, since $U_0, U_1, U_2$ is not a part of an induced $4$-cycle, it follows that $i_2= i_1 \pm 1$. Without a loss of generality let $i_2=i_1+1$. Similarly, because $U_0, U_1, U_3$ is not a part of an induced $4$-cycle, we have  $i_3= i_1 \pm 1$. Since the indices $i_j$ are distinct, it must be that $i_3=i_1-1$. By Proposition  \ref{p3toc4} it follows that  $U_0, U_2, U_3$ is in an induced $4$-cycle in $S(G,a,b)$. 
\end{proof}

An immediate consequence of Theorem~\ref{noClaw} is the following observation.
\begin{obs}
If $H$ is a tree and a shortest path graph, then $H$ is a path.
\end{obs}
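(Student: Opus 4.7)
The plan is to prove the contrapositive: if $H$ is a tree that is not a path, then $H$ is not a shortest path graph. This relies on the classical characterization of paths among trees via the absence of high-degree vertices.

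First, I would note that in any tree, a vertex is incident to at most $2$ edges precisely when the tree is a path (or a single vertex). So if the tree $H$ is not a path, then some vertex $v \in V(H)$ has $\deg_H(v) \geq 3$. Choose three distinct neighbors $U_1, U_2, U_3$ of $v$ in $H$. Since $H$ is acyclic, no two of $U_1, U_2, U_3$ can be adjacent (an edge between any pair would, together with $v$, form a $3$-cycle). Therefore the subgraph of $H$ induced by $\{v, U_1, U_2, U_3\}$ is exactly the claw $K_{1,3}$.

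Next, I would invoke Theorem~\ref{noClaw}: any shortest path graph that contains an induced $K_{1,3}$ must contain a $4$-cycle. If $H$ were a shortest path graph, this would force $H$ to contain a cycle, contradicting the fact that $H$ is a tree. Hence $H$ is not a shortest path graph.

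There is really no obstacle to overcome here; the observation is immediate once one combines Theorem~\ref{noClaw} with the elementary fact that a non-path tree always contains an induced claw at any vertex of degree at least $3$.
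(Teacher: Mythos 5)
Your argument is correct and is exactly the route the paper intends: the paper presents this observation as an immediate consequence of Theorem~\ref{noClaw}, and your proof simply makes explicit the standard facts that a non-path tree has a vertex of degree at least $3$ and that its three neighbors induce a claw in an acyclic graph. Nothing further is needed.
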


Next we establish a characterization of when $C_k$ can be an induced subgraph of some shortest path graph.

\begin{theorem}\label{c5}
$C_k$ is an induced subgraph of some shortest path graph if and only if $k \neq 5$.
\end{theorem}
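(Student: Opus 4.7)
My plan is to handle the two directions separately. In the forward direction (no shortest path graph contains $C_5$ as an induced subgraph) I will use a short parity/counting argument built on Observation~\ref{fundObs}. In the converse direction I will address $k=3$, $k$ even, and odd $k \ge 7$ by appealing to earlier results plus an explicit family of constructions for the odd case.

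For the impossibility of an induced $C_5$, I suppose for contradiction that $H = S(G,a,b)$ contains an induced $5$-cycle on vertices $U_1, \ldots, U_5$, and let $i_\ell$ denote the difference index of the edge $U_\ell U_{\ell+1}$ with subscripts taken cyclically mod $5$. Since every three consecutive cycle vertices form an induced $P_3$, Observation~\ref{fundObs} forces $i_\ell \neq i_{\ell+1}$ for every $\ell$. Letting $f_m$ count the cycle edges with difference index $m$, the non-repetition constraint gives $f_m \le \lfloor 5/2 \rfloor = 2$. Moreover $f_m \ne 1$: if exactly one cycle edge changed the value at position $m$ of the underlying path, then the cyclic sequence of position-$m$ values along $U_1 \to U_2 \to \cdots \to U_5 \to U_1$ would register a single change, which is impossible for a sequence returning to its initial value. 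Hence $f_m \in \{0,2\}$ for every $m$, so $\sum_m f_m$ is even; but $\sum_m f_m = 5$, a contradiction.

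For the converse, the cases $k=3$ and $k$ even follow immediately from Theorem~\ref{ck}, since in each case $C_k$ is itself a shortest path graph. For odd $k \ge 7$, I plan to exhibit an explicit base graph $G_k$ with $d(a,b) = 4$ and three interior levels $L_1, L_2, L_3$, so that the shortest path graph embeds in the Cartesian product of cliques $K_{|L_1|} \,\square\, K_{|L_2|} \,\square\, K_{|L_3|}$. The construction selects a cyclic sequence $(i_1, \ldots, i_k)$ of difference indices from $\{1,2,3\}$ with $i_\ell \ne i_{\ell+1}$ cyclically and each frequency not equal to $1$, then realizes this sequence as a closed walk in the product. For $k=7$ the pattern $(1,2,1,3,1,2,3)$ in $K_3 \,\square\, K_2 \,\square\, K_2$ yields the induced $7$-cycle on the triples $(x_1,y_1,z_1), (x_2,y_1,z_1), (x_2,y_2,z_1), (x_3,y_2,z_1), (x_3,y_2,z_2), (x_1,y_2,z_2), (x_1,y_1,z_2)$; for $k=9$ the pattern $(1,2,3)$ repeated three times inside $K_3 \,\square\, K_3 \,\square\, K_3$ works; and for general odd $k \ge 11$ one can always find a valid frequency distribution (into parts from $\{2, 3, \ldots, \lfloor k/2 \rfloor\}$ summing to $k$) and arrange it cyclically without consecutive repeats.

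The cleanest step is the $C_5$ impossibility, which reduces to the parity count above. The main obstacle is the converse for odd $k \ge 7$: one must verify that the chosen $k$ paths form an \emph{induced} cycle, i.e.\ that all non-consecutive pairs differ in at least two coordinates, and exhibit a uniform recipe covering every odd $k \ge 7$. The individual verifications are routine for each fixed $k$, but a clean, uniform description of how the index pattern and base graph scale with $k$ will require the most care.
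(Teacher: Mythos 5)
Your impossibility argument for $C_5$ is correct and is essentially the paper's own proof: the paper likewise observes that every difference index occurring on an induced cycle must occur at least twice and that two consecutive occurrences force a chord; your repackaging as ``$f_m\in\{0,2\}$ for each $m$, so $\sum_m f_m$ is even, contradicting $\sum_m f_m=5$'' is a clean equivalent. The cases $k=3$, $k$ even are handled exactly as in the paper via Theorem~\ref{ck}, and your $k=7$ and $k=9$ examples check out (I verified that all non-consecutive pairs in your $C_7$ in $K_3\,\square\,K_2\,\square\,K_2$ differ in at least two coordinates).

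The genuine gap is the uniform construction for odd $k\ge 11$. The conditions you propose to impose on the cyclic index pattern --- each frequency in $\{2,\ldots,\lfloor k/2\rfloor\}$ and no two cyclically consecutive indices equal --- are \emph{not} sufficient for the resulting closed walk in $K_{f_1}\,\square\,K_{f_2}\,\square\,K_{f_3}$ to be an induced cycle. For example, already for $k=7$ the pattern $(1,2,1,2,3,1,3)$ satisfies both conditions (frequencies $3,2,2$, no consecutive repeats), yet the arc $1,2,1,2$ contains no index-$3$ edge while its complementary arc $3,1,3$ contains no index-$2$ edge; hence its endpoints $V_1$ and $V_5$ agree in coordinates $2$ and $3$ and differ only in coordinate $1$, producing a chord. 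So the real constraint is that no arc and its complement may each omit a symbol, and your sketch neither states this nor exhibits a pattern family guaranteed to satisfy it for all odd $k\ge 11$ (the repeated pattern $(1,2,3)$ with a short correction block would work, but this must be said and checked). The paper avoids this issue entirely by using a different construction: a base graph $G_{2p+1}$ with $d(a,b)=p+1$ whose induced $C_{2p+1}$ has index pattern $1,2,\ldots,p,1,2,\ldots,p-1,1,p$, i.e.\ frequency $3$ for index $1$ and frequency $2$ for every other index, with inducedness immediate from the explicit list of paths. Your three-level approach is attractive because it keeps $d(a,b)=4$ independent of $k$, but as written it does not yet prove the statement for all odd $k\ge 11$.
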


\begin{proof}
Assume to the contrary, that $S(G,a,b)$ contains an induced $C_5$, say  $\widetilde{U}=(U_1,U_2,U_3,U_4,U_5)$.
Consider the difference indices along the edges of the cycle. Every difference index that occurs must occur at least twice in order to return to the original shortest path.  Thus a 5-cycle can use at most 2 distinct difference indices.  Furthermore, if a difference index occurs twice in a row, say for  $U_1U_2$ and for $U_2U_3$, then the edge $U_1U_3$ is also in $S(G, a,b)$.
Therefore, $C_5$ is not an induced subgraph of a shortest path graph.

To finish the proof, we show that for any $k \neq 5$, $C_k$ is an induced subgraph of some shortest path graph.
In Theorem~\ref{ck} we saw that $C_k$ is itself a shortest path graph when $k=3$ or when $k$ is even. Thus, we only need to consider odd $k>6$.
Suppose that $k=2p+1$ and let $G_{2p+1}$ be a graph with vertex set $V(G_{2p+1})=\{a,b,v_1,v_2,\ldots,v_p,v_1',v_2',\ldots,v_p',v_1''\}$ and edge set
\begin{align*}
E(G_{2p+1}) &= \{av_1, av_1', bv_p, bv_p',av_1'',v_1''v_2',v_1''v_2\}\cup \{v_iv_{i+1},v_i'v_{i+1}',v_iv_{i+1}',v_i'v_{i+1}\,:\,i\in \{1,2,\ldots,n-1\}\}
\end{align*}
Then the following paths  of $G_{2p+1}$ induce a $C_{2p+1}$ in  $S(G_{2p+1}, a, b)$:
\[
\begin{array}{rrrrr}
av_1v_2v_3\cdots v_pb, \\
av_1'v_2v_3\cdots v_pb,  \\
av_1'v_2'v_3\cdots v_pb,\\
\ldots,  \\
av_1'v_2'v_3'\cdots v_p'b,  \\
av_1''v_2'v_3'\cdots v_p'b,\\
av_1''v_2v_3'\cdots v_p'b, \\
av_1''v_2v_3v_4'\cdots v_p'b, \\
\ldots, \\
av_1''v_2v_3\cdots v_{p-1} v_p'b, \\
av_1v_2v_3\cdots v_{p-1}v_p'b  \\
\end{array}
\]
\end{proof}

\begin{theorem} \label{girth5class}
Let $H$ be a graph with $\girth(H) \geq 5$. Then $H$ is a shortest path graph,  if and only if each nontrivial component of $H$ is a path or a cycle of even length greater than $5$.
\end{theorem}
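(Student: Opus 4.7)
The plan is to handle the two directions of the equivalence separately. For the sufficient (``if'') direction, I would note that each trivial component of $H$ is an isolated vertex (a shortest path graph on its own), each path component is a shortest path graph by Lemma~\ref{firstpath}, and each even cycle component of length at least $6$ is a shortest path graph by Theorem~\ref{ck}. Since the family of shortest path graphs is closed under disjoint union by Theorem~\ref{disconnect}, iterating this closure yields that $H$ itself is a shortest path graph.

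The interesting direction is necessity. Suppose $H$ is a shortest path graph with $\girth(H)\ge 5$. My first step is to deduce that $\Delta(H)\le 2$. Because $\girth(H)\ge 5$, $H$ is triangle-free, so any vertex $v$ of degree at least $3$ has a neighborhood containing three pairwise non-adjacent vertices; together with $v$, these induce a claw $K_{1,3}$. Theorem~\ref{noClaw} would then force a $4$-cycle through two edges of that claw, contradicting $\girth(H)\ge 5$. Hence $\Delta(H)\le 2$, and every component of $H$ is an isolated vertex, a path, or a cycle.

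Next I would restrict which cycles can appear as components. Cycles of length $3$ or $4$ are ruled out directly by the girth assumption. For an odd cycle component $C_k$ with $k\ge 5$, the component is automatically an induced subgraph of $H$, so Lemma~\ref{c4induced} produces an induced $C_4$ in $H$, again contradicting $\girth(H)\ge 5$. Thus every cycle component must be even and of length at least $6$, which together with the path and isolated-vertex possibilities from the previous step yields exactly the classification claimed.

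The principal obstacle is the first step of the necessity argument: turning the girth hypothesis into the degree bound $\Delta(H)\le 2$. This is where the combination of triangle-freeness and Theorem~\ref{noClaw} comes together in a short but essential way. Once the degree bound is in hand, the remaining analysis of which cycle lengths can occur is a brief appeal to Lemma~\ref{c4induced} and to the girth hypothesis itself.
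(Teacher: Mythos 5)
Your proposal is correct and follows essentially the same route as the paper: use Theorem~\ref{noClaw} to bound the maximum degree by $2$, classify the components as paths or cycles, exclude odd cycles via Lemma~\ref{c4induced} and short cycles via the girth hypothesis, and assemble the converse from Lemma~\ref{firstpath}, Theorem~\ref{ck}, and Theorem~\ref{disconnect}. Your added remark that triangle-freeness makes the claw at a degree-$3$ vertex induced is a detail the paper leaves implicit, and it is a welcome clarification.
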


\begin{proof}
If $\girth(H) \geq 5$, by Theorem~\ref{noClaw} there is no vertex $U \in V(H)$ with degree $\deg_H(U) \geq 3$.  Thus each vertex in $H$ must have degree $0$, $1$, or $2$. From this, it follows that every nontrivial component of $H$ is a path or cycle. By Lemma~\ref{c4induced}, any induced odd cycle forces an induced $C_4$. Therefore all of these cycles must have even length.

By Lemma~\ref{firstpath}, a path of any length is attained. In Theorem~\ref{ck}, it was shown how to construct a shortest path graph that is a cycle of even length.
Finally, by Theorem~\ref{disconnect}, the disjoint union of any set of shortest path graphs is again a shortest path graph.
\end{proof}

Now that shortest path graphs of  girth $5$ or more  have been characterized, a natural next step would be to work towards a characterization of girth $4$ shortest path graphs.  The prevalence of Cartesian products in shortest path graphs tends to indicate that $4$-cycles will play a large and challenging role in the study of these graphs.
We leave this challenge to a future paper and instead characterize  the shortest path graphs  of grid graphs, which have  particularly nice structure. We study these in the next section.

\section{Shortest Paths in Grid Graphs}\label{girth4}

An $m$-dimensional grid graph is the Cartesian product of $m$ paths, $P_{n_1} \square \cdots \square P_{n_m}$. We denote the vertices of $P_{n_1} \square \cdots \square P_{n_m}$ with the usual Cartesian coordinates on the $m$-dimensional lattice, so the vertex set is given by $V(P_{n_1} \square \cdots \square P_{n_m}) = \{(x_1,x_2,\ldots, x_m) : x_i \in \mathbb{Z}, 0 \leq x_1 \leq n_1, \ldots, 0 \leq x_m \leq n_m\}$. 
In what follows, we consider the geodesics between two diametric vertices of a grid graph, i.e., the shortest paths between the origin and the vertex $(n_1,\ldots,n_m)$. Because these two vertices will always be the vertices of consideration for grid graphs, we will denote the shortest path graph of $P_{n_1} \square \cdots \square P_{n_m}$ with respect to them simply by $S(P_{n_1} \square \cdots \square P_{n_m})$.
A two-dimensional grid graph and the diametric vertices under consideration are shown in Figure~\ref{Fig:G(n_1,n_2)}.

\begin{figure}[htb]%
\begin{center}
\includegraphics[scale=1]{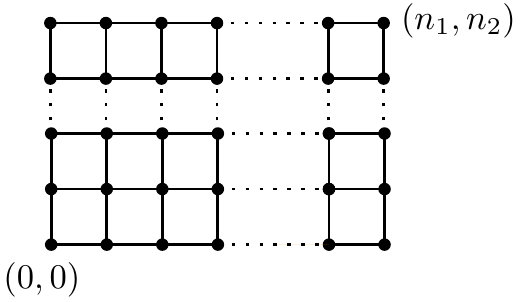}%
\end{center}
\caption{$P_{n_1}\square P_{n_2}$ }%
\label{Fig:G(n_1,n_2)}%
\end{figure}

For convenience of notation, we will consider the shortest paths in $P_{n_1} \square \cdots \square P_{n_m}$ as a sequence of moves through the grid in the following way. For $1 \leq i \leq m$, let $\mathbf{e_i}$ be the $i^{{\rm th}}$ standard basis vector in $\mathbb{R}^m$.
A move from a vertex $(x_1,\ldots,x_i,\ldots,x_m) \in V(P_{n_1} \square \cdots \square P_{n_m})$ in the $\mathbf{e_i}$ direction means that the next vertex along the path is $(x_1,\ldots,x_i+1,\ldots,x_m)$.

Note that a shortest path in $P_{n_1} \square \cdots \square P_{n_m}$ from $(0,\ldots,0)$ to $(n_1,\ldots,n_m)$ consists of exactly $N=\sum_{i=1}^{m} n_i$ moves, $n_i$ of which are in the $\mathbf{e_i}$ direction. Furthermore, observe that any $m$-ary sequence  of length $N$ in which the symbol $i$ occurs exactly  $n_i$ times corresponds to a geodesic in $P_{n_1} \square \cdots \square P_{n_m}$ and  that there are $\frac{\left(\sum_{i=1}^m n_i\right)!}{n_1!\cdots n_m!}$ such shortest paths.
Explicitly, a geodesic $U$  will be denoted by the  $m$-ary sequence $\widetilde{U}= s_1 \ldots  s_N\in \mathbb{Z}_m^N$, where $s_j = i \in \{1,\ldots,m\}$ if the $j^{{\rm th}}$ move in $U$ is in the $\mathbf{e_{i}}$ direction. In this way, the symbol $1$ corresponds to a move in the $\mathbf{e_1}$ direction, $2$ corresponds to a move in the $\mathbf{e_2}$ direction, etc. We will refer to $\widetilde{U}$ as the sequence representation of $U$ and use $B_{n_1,\ldots,n_m}\subset \mathbb{Z}_m^N$  
to denote the set of all  sequences with each $i$ occurring exactly $n_i$ times.

\begin{figure}[htb]%
\begin{center}
\includegraphics[scale=1]{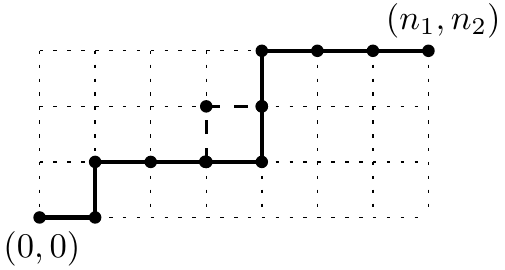}%
\end{center}
\caption{Adjacent paths in $P_{n_1} \square P_{n_2}$}%
\label{Fig:adjacent_paths}%
\end{figure}

Two shortest paths in $S(P_{n_1} \square \cdots \square P_{n_m})$ are adjacent if and only if they differ by a single vertex, i.e., if one path can be obtained from the other by swapping a single pair of two consecutive moves in different directions (see Figure~\ref{Fig:adjacent_paths}). Thus, if $U$ and $W$ are two shortest paths, then $U \sim W$ if and only if their sequence representations in  $B_{n_1,\ldots, n_m}$,
$\widetilde{U}$ and $\widetilde{W}$, respectively, have the forms
$\widetilde{U} = s_1 \ldots s_is_{i+1}\ldots s_N$ and $\widetilde{W} = s_1 \ldots s_{i+1}s_i\ldots s_N$ where $s_i \neq s_{i+1}$. 
It follows that
$UW\in E( S(P_{n_1} \square \cdots \square P_{n_m})) $
if and only if the  two sequences $\widetilde{U}, \widetilde{W} \in B_{n_1,\ldots,n_m}$ can be obtained from each  other by switching two different consecutive symbols.

The main result of this section is that $S(P_{n_1} \square \cdots \square P_{n_m})$ is  isomorphic to an induced subgraph of the integer lattice graph  $\mathbb{Z}^M$, where $M = \sum_{i=2}^{m}(i-1)n_i$.
To prove this result we define a mapping from $B_{n_1,\ldots,n_m}\subset \mathbb{Z}_m^N$ to coordinates in $\mathbb{Z}^M$.

 \begin{theorem}\label{Th:gridgraph}
The shortest path graph of $P_{n_1} \square \cdots \square P_{n_m}$ is isomorphic to an induced subgraph of the integer  lattice graph  $\mathbb{Z}^M$, where $M = \sum_{i=2}^{m}(i-1)n_i$.
\end{theorem}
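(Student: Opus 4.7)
The plan is to construct an explicit injective encoding $f : B_{n_1,\ldots,n_m}\to\mathbb{Z}^M$ and verify that it realizes $S(P_{n_1}\,\square\,\cdots\,\square\, P_{n_m})$ as an induced subgraph of the lattice graph on $\mathbb{Z}^M$ (with the standard $L^1$-adjacency). I index the $M$ coordinates by triples $(j,\alpha,i)$ with $2\le j\le m$, $1\le\alpha\le n_j$, and $1\le i<j$; there are precisely $\sum_{j=2}^m (j-1)n_j=M$ such triples. For $\widetilde{U}\in B_{n_1,\ldots,n_m}$, let $c_{j,\alpha,i}(\widetilde{U})$ be the number of $i$'s occurring before the $\alpha$-th occurrence of $j$ in $\widetilde{U}$, and set $f(\widetilde{U})=\bigl(c_{j,\alpha,i}(\widetilde{U})\bigr)$. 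The map $f$ is injective since, for each pair $i<j$, the values $c_{j,\alpha,i}$ for $\alpha=1,\ldots,n_j$ determine the complete interleaving of the $i$'s with the $j$'s in $\widetilde{U}$, and knowing the relative order of every pair of distinct symbols determines $\widetilde{U}$.

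Next I would verify that edges of $S(P_{n_1}\,\square\,\cdots\,\square\, P_{n_m})$ map to edges of the lattice. If $\widetilde{U}\sim\widetilde{W}$ via swapping $x=\widetilde{U}_p$ and $y=\widetilde{U}_{p+1}$ with $x\ne y$, set $i=\min(x,y)$, $j=\max(x,y)$, and let $\alpha$ denote the occurrence-index of $j$ involved in the swap. A direct check shows that $c_{j,\alpha,i}$ changes by exactly $\pm 1$ while every other coordinate is unchanged: counts tied to occurrences outside positions $p,p+1$ are unaffected because the multiset of letters at those two positions is preserved, and coordinates tracked at the swapped occurrence of $i$ concern only letters of label strictly less than $i$, none of which were moved. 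Hence $f(\widetilde{U})$ and $f(\widetilde{W})$ are at $L^1$-distance exactly $1$ in $\mathbb{Z}^M$.

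The main obstacle is the induced-subgraph direction: if $f(\widetilde{U})$ and $f(\widetilde{W})$ both lie in the image and differ by a single unit vector $\pm e_{(j,\alpha,i)}$, I must show $\widetilde{U}\sim\widetilde{W}$. Because all other coordinates agree, the relative order of every pair of distinct symbols in $\widetilde{U}$ and $\widetilde{W}$ coincides, except that exactly one occurrence of $i$, say the $\gamma$-th, has crossed sides of the $\alpha$-th occurrence of $j$. Let $q$ and $p$ be the positions in $\widetilde{U}$ of this $j$ and this $i$ respectively; assume $q<p$ (the other case is symmetric). Any symbol at a position strictly between $q$ and $p$ in $\widetilde{U}$ must retain its relative order with both the $\alpha$-th $j$ and the $\gamma$-th $i$ when we pass to $\widetilde{W}$, but this forces it to lie simultaneously after the $\alpha$-th $j$ and before the $\gamma$-th $i$ in $\widetilde{W}$---impossible once those two have switched. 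Thus no letter lies between them, forcing $p=q+1$, so $\widetilde{U}$ and $\widetilde{W}$ differ by a single adjacent swap. Combined with the previous step, this exhibits $f$ as an isomorphism from $S(P_{n_1}\,\square\,\cdots\,\square\, P_{n_m})$ onto an induced subgraph of $\mathbb{Z}^M$.
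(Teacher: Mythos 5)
Your proof is correct and takes essentially the same approach as the paper: the paper's map $\phi$ records, for each pair $i<j$ and each occurrence of $j$, the number of $i$'s \emph{following} that occurrence, which is exactly the complementary count $n_i - c_{j,\alpha,i}$ of your coordinates, so the two encodings differ only by an affine involution of the lattice. Your three verification steps (injectivity, adjacency preservation, and the induced-subgraph direction) correspond in substance to the paper's, with only cosmetic differences in how injectivity and the reverse adjacency check are argued.
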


\begin{proof}
Consider the $m$-dimensional grid graph  $P_{n_1} \square \cdots \square P_{n_m}$, with    $B_{n_1,\ldots,n_m}\subset \mathbb{Z}_m^N$
the set of all  $m$-ary sequences corresponding to its  geodesics.
Define a map $\phi: B_{n_1,\ldots,n_m}\rightarrow \mathbb{Z}^M.$
For a sequence $\widetilde{U} \in B_{n_1,\ldots,n_m}$, let

\[
\begin{array}{llllll}
\phi(\widetilde{U}):=    &(a_{121},\ldots, a_{12n_2}, & & &\\
	&\hspace{1ex} a_{131},\ldots, a_{13n_3},& a_{231},\ldots, a_{23n_3},&&\\
	&& \ldots &&\\
	&\hspace{1ex} a_{1m1},\ldots, a_{1mn_m},& a_{2m1},\ldots, a_{2mn_m},&\ldots&, a_{(m-1)m1},\ldots, a_{(m-1)mn_m}),
\end{array}
\]
where $a_{ijk}$ is the number of $i$'s following the $k^{{\rm th}}$ $j$ in $\widetilde{U}$. For example, if $\widetilde{U}=32121231 \in B_{3,3,2}$, then
$\phi(\widetilde{U}) =(3,2,1,3,1,3,0)$. Also, $\phi$ maps the sequence $1\cdots 1 2\cdots2 \cdots m\cdots m$ to the origin. Thus, $\phi$ maps $B_{n_1,\ldots,n_m}$
into a set of vectors $(a_{ijk}) \in \mathbb{Z}^M$ such that $ 2\leq j \leq m,  1 \leq i < j$, and $ 1 \leq k \leq n_j$, in the order indicated.
Note that for all $i,j,k$, $a_{ijk} \leq n_i$ since there are at most $n_i$ $i$'s following a $j$, and $a_{ijk} \geq a_{ij(k+1)}$ since at least as many $i$'s appear after the
 $k^{{\rm th}}$ $j$ than after the $(k+1)^{{\rm st}}$ $j$.

To see that $\phi$ is injective, consider two distinct sequences $\widetilde{U} = s_1\ldots s_N$ and $\widetilde{W} = s_1'\ldots s_N'$ in $B_{n_1,\ldots,n_m}$, and denote their images under $\phi$ by $A$ and $A'$, respectively. Let $r$ be the first index where the entries of $\widetilde{U}$ and $\widetilde{W}$ differ.
Without loss of generality, assume that $s_r = j > s_r' = i$ and that $s_r$ is the $k^{{\rm th}}$ $j$ occurring in $\widetilde{U}$. Then, the $k^{{\rm th}}$ $j$ in
 $\widetilde{W}$ will appear after $s_r' = i$, so the number of $i$'s in $\widetilde{W}$ following the $k^{{\rm th}}$ $j$ will be at least one less as compared to the sequence $\widetilde{U}$. Therefore, if $a_{ijk}$ and $a_{ijk}'$ are the $ijk$-components of $A$ and $A'$, respectively, then $a_{ijk} > {a}_{ijk}'$, showing
  $\phi(\widetilde{U}) \neq \phi(\widetilde{W})$.

To finish the proof, we need to show that $\phi$ preserves adjacency. Suppose $\widetilde{U}$ and $\widetilde{W}$ are adjacent sequences in $B_{n_1,\ldots,n_m}$. Then $\widetilde{U}$ and $\widetilde{W}$ have the forms $\widetilde{U}=s_1\ldots s_rs_{r+1} \ldots s_N$ and $\widetilde{W}=s_1\ldots s_{r+1}s_r \ldots s_N$, where $s_r \neq s_{r+1}$. Let $ s_{r+1}=j> s_r = i$.
Now, suppose that $s_{r+1}$ is the $k^{{\rm th}}$ $j$ appearing in $\widetilde{U}$. Then the only difference in the vectors $\phi(\widetilde{U})$ and $\phi(\widetilde{W})$ is that the $ijk$-component of $\phi({\widetilde{W}})$ is increased by one unit, so $\phi(\widetilde{U})$ and $\phi({\widetilde{W}})$ are adjacent vertices in $\mathbb{Z}^M$.
To see that $\phi^{-1}$ also preserves adjacency, consider two adjacent vertices, $A$ and $A'$, in the image of $\phi$. Let $a_{ijk}$ be the $ijk$-component of $A$, and without loss of generality, assume that ${A'}$ is obtained from $A$ by increasing $a_{ijk}$ to $a_{ijk}+1$. Because ${A'}$ is in the image of $\phi$, it follows that the symbol directly preceding the $k^{{\rm th}}$ $j$ in $\phi^{-1}(A)$ is $i$. To see this, first note that there must be at least one $i$ preceding the $k^{{\rm th}}$ $j$. Otherwise, $a_{ijk} = n_i$ and cannot be increased. If there is any other symbol between the $k^{{\rm th}}$ $j$ and the $i$ preceding it, other components of $A$ must be changed in order to increase $a_{ijk}$.
However, $A$ and $A'$ have only one different component. Thus, $\phi^{-1}(A')$ can be obtained from $\phi^{-1}(A)$ by switching the $k^\text{th}$ $j$ and the $i$ directly preceding it. Therefore, $\phi^{-1}(A)$ and $\phi^{-1}(A')$ are adjacent vertices in $B_{n_1,\ldots,n_m}$.
\end{proof}

The shortest path graph of a two-dimensional grid graph $P_{n_1} \square P_{n_2}$ is particularly easy to characterize, as demonstrated in the following corollary. First, we need an additional definition.
The staircase graph $S_{n_1,n_2}$ is an induced subgraph of the grid graph on the integer lattice $\mathbb{Z}^{n_2}$. 
$S_{n_1,n_2}$ has vertex set
$V(S_{n_1,n_2}) = \{(a_1,\ldots,a_{n_2}) : a_k \in \mathbb{Z}, n_1\geq a_1 \geq a_2 \geq \cdots \geq a_{n_2} \geq 0\}$ (see Figure \ref{Fig:Stairs}).

\begin{figure}[htb]
\begin{center}
\includegraphics[scale=1]{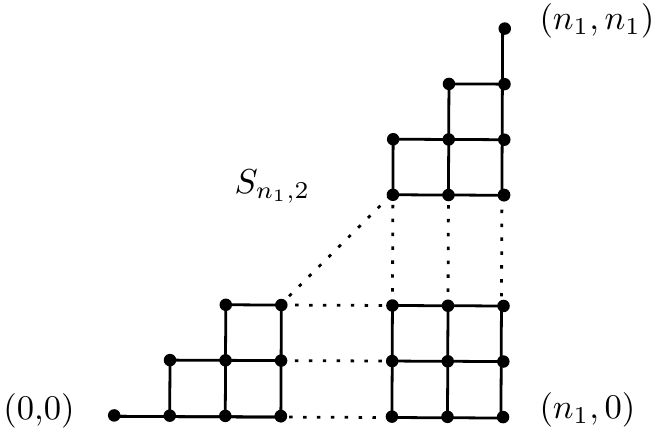}%
\includegraphics[scale=1]{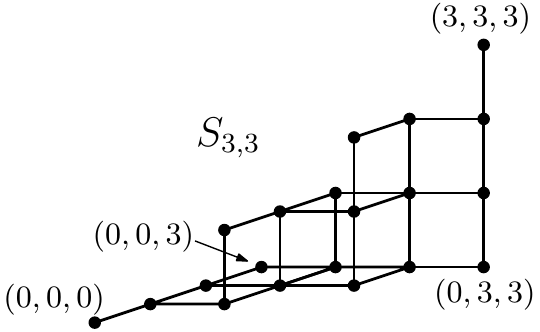}
\end{center}
\caption{The staircase graphs $S_{n_1,2}$ (left) and $S_{3,3}$ (right)}%
\label{Fig:Stairs}%
\end{figure}

\begin{corollary}
The shortest path graph of $P_{n_1} \square P_{n_2}$ is isomorphic to the staircase graph $S_{n_1,n_2}$.
\end{corollary}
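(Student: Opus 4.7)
The plan is to specialize Theorem~\ref{Th:gridgraph} to the case $m=2$ and then identify the image of the embedding $\phi$ explicitly as $V(S_{n_1,n_2})$. When $m=2$, we have $M = \sum_{i=2}^{2}(i-1)n_i = n_2$, and the map $\phi : B_{n_1,n_2} \to \mathbb{Z}^{n_2}$ sends a sequence $\widetilde{U}$ to $(a_{121},a_{122},\ldots,a_{12n_2})$, where $a_{12k}$ is the number of $1$'s occurring after the $k^{\text{th}}$ $2$ in $\widetilde{U}$. By Theorem~\ref{Th:gridgraph}, $\phi$ is an isomorphism between $S(P_{n_1}\square P_{n_2})$ and the subgraph of $\mathbb{Z}^{n_2}$ induced on its image, so it suffices to verify that this image equals $V(S_{n_1,n_2})$.

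First I would check containment of the image in $V(S_{n_1,n_2})$. The two inequalities noted in the proof of Theorem~\ref{Th:gridgraph} give precisely $a_{12k}\le n_1$ (there are at most $n_1$ ones in the sequence) and $a_{12k}\ge a_{12(k+1)}$ (every $1$ following the $(k+1)^{\text{st}}$ $2$ also follows the $k^{\text{th}}$ $2$). Together with the trivial bound $a_{12n_2}\ge 0$, this yields $n_1\ge a_{121}\ge a_{122}\ge\cdots\ge a_{12n_2}\ge 0$, which is exactly the defining condition of $V(S_{n_1,n_2})$.

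For the reverse containment, given any tuple $(a_1,\ldots,a_{n_2})$ with $n_1\ge a_1\ge a_2\ge\cdots\ge a_{n_2}\ge 0$, I would explicitly construct a preimage sequence in $B_{n_1,n_2}$ as
\[
\underbrace{1\cdots 1}_{n_1-a_1}\,2\,\underbrace{1\cdots 1}_{a_1-a_2}\,2\,\underbrace{1\cdots 1}_{a_2-a_3}\,2\,\cdots\,2\,\underbrace{1\cdots 1}_{a_{n_2-1}-a_{n_2}}\,2\,\underbrace{1\cdots 1}_{a_{n_2}}.
\]
The total count of $2$'s is $n_2$, and the total count of $1$'s telescopes to $n_1$, so this sequence lies in $B_{n_1,n_2}$. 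The number of $1$'s occurring after the $k^{\text{th}}$ $2$ telescopes to $a_k$, so $\phi$ sends this sequence to $(a_1,\ldots,a_{n_2})$.

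With the image of $\phi$ identified with $V(S_{n_1,n_2})$ and $\phi$ already shown in Theorem~\ref{Th:gridgraph} to be a graph isomorphism onto the induced subgraph of $\mathbb{Z}^{n_2}$ on its image, the corollary follows. I do not expect any real obstacle here; the only step requiring genuine content beyond Theorem~\ref{Th:gridgraph} is the explicit construction of a preimage, and even that reduces to a short telescoping calculation.
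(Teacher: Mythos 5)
Your proof is correct and follows essentially the same route as the paper: specialize $\phi$ to $m=2$, verify the monotonicity bounds for one containment, and exhibit a preimage for the other. The only difference is that you write out the preimage sequence explicitly with the telescoping block lengths, where the paper simply asserts its existence; this is a harmless (and slightly more careful) elaboration of the same argument.
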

\begin{proof}
We have seen that the shortest path graph $S(P_{n_1}\square P_{n_2})$ can be described as a graph on the  set of binary strings $B_{n_1,n_2}$. Furthermore, from the proof of Theorem \ref{Th:gridgraph}, the mapping $\phi:B_{n_1,n_2} \rightarrow \mathbb{Z}^{n_2}$ defined by $\phi(\widetilde{U}) := (a_1, a_2\ldots,a_{n_2})$, where $a_k$ is the number of $1$'s following the $k^{{\rm th}}$ occurrence of $2$ in $\widetilde{U}$, is injective and adjacency preserving. Therefore, this corollary follows if we can show $\phi(B_{n_1,n_2}) = V(S_{n_1,n_2})$.

Let $\widetilde{U} \in B_{n_1,n_2}$ and let $\phi(\widetilde{U}) = (a_1, a_2\ldots,a_{n_2})$. From the definition of $\phi$, it follows that $n_1\geq a_k \geq a_{k+1} \geq 0$ for all $k \in\{ 1,\ldots,n_2-1\}$, since the number of $1$'s following the $k^{{\rm th}}$ $2$ is greater than or equal to the number of $1$'s following the $(k+1)^{{\rm st}}$ $2$. Thus, $\phi(\widetilde{U}) \in V(S_{n_1,n_2})$.
Conversely, for any  $A=(a_1,\ldots,a_{n_2}) \in V(S_{n_1,n_2})$, let $\widetilde{U}$ be the sequence in $B_{n_1,n_2}$ that has exactly $a_k$ $1$'s following the $k^{{\rm th}}$ $2$ for $k\in\{1,\ldots,n_2\}$. Then, $\phi(\widetilde{U}) = A$ showing $V(S_{n_1,n_2}) \subseteq \phi(B_{n_1,n_2})$.
\end{proof}

\begin{remark} Theorem \ref{Th:gridgraph} implies that the dimension of the lattice graph $\mathbb{Z}^M$ of which $S(P_{n_1} \square \cdots \square P_{n_m})$ is an induced subgraph depends on the ordering of $n_1,\ldots, n_m$. Since  $M=\sum_{i=2}^{m}(i-1)n_i$, the least value for  $M$ will occur when $n_1,\cdots, n_m$ are  listed in decreasing order.
\end{remark}

It is a direct consequence of our discussion on grid graphs that the path of length $k$ is the shortest path graph of $P_k\square P_1$.
\begin{corollary}\label{pathThm}
For the grid $G=P_k\square P_{1}$, $S(G)\cong P_k$.
\end{corollary}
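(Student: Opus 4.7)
The plan is to deduce this as a direct specialization of the staircase graph corollary preceding it. Taking $n_2 = 1$ there immediately gives $S(P_k \square P_1) \cong S_{k,1}$, and by definition $S_{k,1}$ has vertex set $\{(a_1) : 0 \leq a_1 \leq k\}$ realized as an induced subgraph of $\mathbb{Z}^1$. Since consecutive integers are adjacent in $\mathbb{Z}^1$, this induced subgraph is precisely the path on $k+1$ vertices, which is $P_k$ in the paper's convention.

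Alternatively, and perhaps more transparently, I would argue directly from the sequence representation introduced in the proof of Theorem~\ref{Th:gridgraph}. The $(0,0),(k,1)$-geodesics correspond to binary sequences in $B_{k,1}$ in which the symbol $2$ appears exactly once, so each such geodesic is determined by the position $r \in \{1, \ldots, k+1\}$ of the unique $2$. Two geodesics are adjacent in $S(G)$ exactly when their sequences differ by swapping two consecutive distinct symbols; in this one-$2$ setting that can only mean swapping the $2$ with an adjacent $1$, which shifts the position of the $2$ by exactly one unit. Hence the shortest path graph is the path visiting the positions $1, 2, \ldots, k+1$ in order, yielding $P_k$.

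I do not anticipate any genuine obstacle: with the machinery of the section already in place, the corollary is essentially a one-line specialization. The only observations needed are that $M = (2-1)\cdot 1 = 1$ in Theorem~\ref{Th:gridgraph}, so the induced subgraph lives in the one-dimensional lattice $\mathbb{Z}$, and that $\phi(B_{k,1})$ sweeps out a contiguous interval of $k+1$ integers. If anything requires a brief extra line, it is just confirming that $\phi$ restricted to $B_{k,1}$ is surjective onto $\{0,1,\ldots,k\}$, which is immediate since any prescribed count of $1$'s following the single $2$ can be realized by placing the $2$ in the appropriate slot.
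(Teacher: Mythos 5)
Your proposal is correct and follows essentially the same route as the paper, which states the corollary as a direct consequence of the grid-graph discussion (i.e., the staircase corollary specialized to $n_2=1$, giving the induced subgraph of $\mathbb{Z}^1$ on $\{0,1,\ldots,k\}$). Your count of $k+1$ vertices for $P_k$ matches the paper's convention, and the alternative argument via $B_{k,1}$ is a fine, equivalent restatement.
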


Earlier we made the comment that two base graphs may produce the same shortest path graph. In fact, even two reduced graphs can have the same shortest path graph, e.g., the  graphs $P_k\square P_1$ and $G_k$ given in  Lemma \ref{firstpath} have the same shortest path graph 
 yet they are reduced and non-isomorphic.

Another special grid graph is the $m$-dimensional  hypercube, $Q_m=P_1 \square \cdots \square P_1$.
We shall observe  in Proposition \ref{Th:Cayley}, that $S(Q_m)$ is isomorphic to a Cayley graph of the symmetric group $S_m$.

We first  recall some material from elementary group theory and algebraic graph theory. See  \cite{Bacher, Fraleigh, Godsil} for more detail.
Let $(\Gamma, \cdot)$ be a group. 
 Let $S$ be a generating set of $\Gamma$ that does not contain the identity element and such that  for each $g\in S$, $g^{-1}$ is also in $S$. The {\em Cayley graph} of $\Gamma$ with generating set $S$, denoted by  $\mbox{Cay}(\Gamma; S)$,  is the graph whose vertices are the elements of $\Gamma$, and which has an edge between two vertices $x$ and $y$ if and only if $x\cdot s = y$ for some $s\in S$.

The symmetric group $S_m$ is the group whose elements are the permutations on the set $\{1, 2, \ldots, m\}$. An element of  $S_m$ is a bijection from  the set $\{1, 2, \ldots, m\}$ to itself.  Denote by $s_1s_2 \ldots s_m$ the permutation
$\sigma$ given by $\sigma(i)=s_i$, $1\le i \le m$.
 The group operation in $S_m$ is the composition of permutations defined by $(\sigma\tau )(j)=\sigma(\tau(j))$, for $\sigma, \tau \in S_m$,  $1\le j\le m$.
An \emph{adjacent transposition} is a permutation $\tau_i$ such that
\[\tau_i(i) = i+1, \tau_i(i+1) = i \text{, and } \tau_i(k) = k  \text{ for } k\not\in\{i,i+1\}.\]
 It is well known that every permutation can be represented as the composition of finitely many adjacent transpositions. Therefore, the set of adjacent transpositions, $T$, generates $S_m$. We also note that each adjacent transposition is its own inverse. Hence, we can define the Cayley graph of $S_m$ with generating set $T$, $\mbox{Cay}(S_m; T )$.\footnote{This Cayley graph was studied by Bacher \cite{Bacher} and is also known, in other contexts, as a Bubble Sort Graph.}

To gain some insight into the structure of  $\mbox{Cay}(S_m; T)$,  consider the effect of the composition of an element $\sigma \in S_m$ with an adjacent transposition.
Let $\sigma = s_1s_2 \ldots s_m$ and $1\le i\le m-1$. Then
\[\sigma\tau_i(i) = \sigma(i+1),\, \sigma\tau_i(i+1) = \sigma(i), \text{ and } \sigma\tau_i(k) =\sigma( k), \text{ if } k \not\in\{i,i+1\},\]
or simply
\[\sigma\tau_i = s_1s_2 \ldots s_{i+1}j_i \ldots s_m.\]
Thus, the effect of the composition $\sigma\tau_i$ is the switching of the two consecutive elements $s_i$ and $s_{i+1}$  in $\sigma$.
We can conclude that the neighborhood of a vertex $\sigma$ in $\mbox{Cay}(S_m; T )$ is the collection of all $(m-1)$ permutations on $\{1,\ldots,m\}$ obtained from $\sigma$ by interchanging two consecutive elements.

Now  consider the $m$-dimensional hypercube, $Q_m=P_1 \square \cdots \square P_1$.
The vertices of  $P_1 \square \cdots \square P_1$ correspond to all binary strings of length $m$.  A shortest path in $Q_m$ is a sequence of exactly one move in each direction. 
In the  discussion preceding Theorem \ref{Th:gridgraph} we introduced a  correspondence between the geodesics of  
$P_{n_1} \square \cdots \square P_{n_m}$ and the set of sequences of moves $B_{n_1,\ldots,n_m}$.
For $Q_m$, this is a bijection between the geodesics in $P_1 \square \cdots \square P_1$ and the sequences in $B_{1,\ldots,1}$. Since  in each geodesic a vertex coordinate changes exactly once, $B_{1,\ldots,1}$ coincides with the set of permutations on $\{1, 2, \ldots, m\}$.  The sequence representation
of $U$ is denoted by $\widetilde{U} = s_1s_2 \ldots s_m$,  where each element of $\{1, \ldots, m\}$ appears precisely once.
 Recall that we defined two sequences in $B_{1,\ldots,1}$ as adjacent, if and only if one can be obtained from the other by switching two (different) consecutive symbols. This is equivalent to two permutations being adjacent if and only if one can be obtained from the other using an adjacent transposition. Thus, the set $B_{1,\ldots,1}$ together with the adjacency relation is isomorphic to $\mbox{Cay}(S_m; T )$.  Hence we observe:

\begin{proposition}\label{Th:Cayley} Let $S_m$ be the symmetric group,  let $T$ be the set of adjacent transpositions, and
let $a$ and $b$ be diametric vertices on $Q_m$.
 Then $S(Q_m,a,b) \cong \mbox{Cay}(S_m; T )$.
\end{proposition}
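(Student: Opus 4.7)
The proof will consolidate the observations made in the paragraphs immediately preceding the proposition into a formal graph isomorphism. The plan is to construct a map $\Phi : V(S(Q_m,a,b)) \to S_m$ and verify that it is a bijection that carries edges to edges and non-edges to non-edges.

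First I would reduce to the case where $a = (0,\ldots,0)$ and $b = (1,\ldots,1)$. Any pair of diametric vertices of $Q_m$ differs in all $m$ coordinates, so by flipping the appropriate coordinates one obtains an automorphism of $Q_m$ sending $\{a,b\}$ to $\{(0,\ldots,0),(1,\ldots,1)\}$. Such an automorphism carries $a,b$-geodesics bijectively onto geodesics between the standard diametric pair, and it preserves the single-vertex difference relation, so it induces an isomorphism of shortest path graphs. Hence it suffices to treat the canonical case.

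Next I would invoke the sequence representation set up in Section~\ref{girth4}. Because $Q_m = P_1 \,\square\, \cdots \,\square\, P_1$, a shortest path from the origin to $(1,\ldots,1)$ uses exactly one move in each direction $\mathbf{e_i}$. Consequently its sequence representation $\widetilde{U} = s_1 \ldots s_m$ lies in $B_{1,\ldots,1}$, in which each symbol of $\{1,\ldots,m\}$ appears precisely once, so $\widetilde{U}$ may be viewed as a permutation. Define $\Phi(U) = \sigma$, where $\sigma \in S_m$ is given by $\sigma(i) = s_i$. This yields a bijection from $V(S(Q_m,a,b))$ onto $S_m = V(\mbox{Cay}(S_m;T))$.

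Finally, for adjacency I would combine two facts already on the page. From the construction of the shortest path graph of a grid in Section~\ref{girth4}, $U$ and $W$ are adjacent in $S(Q_m,a,b)$ if and only if $\widetilde{U}$ and $\widetilde{W}$ differ by swapping two consecutive distinct symbols, and in $B_{1,\ldots,1}$ the distinctness condition is automatic. From the computation $\sigma\tau_i = s_1 \ldots s_{i-1} s_{i+1} s_i s_{i+2} \ldots s_m$ recorded above the proposition, right-multiplication by the adjacent transposition $\tau_i$ is exactly the operation of interchanging the entries in positions $i$ and $i+1$ of $\sigma$. Thus $U \sim W$ in $S(Q_m,a,b)$ if and only if $\Phi(U) \sim \Phi(W)$ in $\mbox{Cay}(S_m;T)$, completing the proof. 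The only mild subtlety — hardly an obstacle — is confirming that the general sequence-representation machinery of Theorem~\ref{Th:gridgraph} specializes cleanly when every $n_i = 1$, but this is immediate from the definitions.
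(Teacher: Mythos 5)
Your proof is correct and follows essentially the same route as the paper: the paper's argument is exactly the identification of $a,b$-geodesics in $Q_m$ with the sequences in $B_{1,\ldots,1}$, viewed as permutations, together with the observation that the consecutive-swap adjacency coincides with right-multiplication by an adjacent transposition. Your explicit reduction to the canonical diametric pair via a coordinate-flipping automorphism is a small tidying step the paper leaves implicit, but it does not change the substance of the argument.
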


Although the function $\phi$ introduced in the proof of Theorem \ref{Th:gridgraph} is not needed in Proposition \ref{Th:Cayley},  that function has an interesting interpretation in the case of $Q_m$.  Here the domain
of $\phi$ is $B_{1,\ldots,1}$,  which as we have discussed,  is isomorphic to $S_m$,  the set of permutations of $\{1,2, \ldots, m\}$.  Referring to the definition of $\phi$ in the proof of Theorem \ref{Th:gridgraph},  and using the notation introduced there, the image of  $\phi$ is a sequence whose elements are $a_{ijk}$,  where $1\le i <j \le m$ and $1\le k \le n_j$.   In the case of $Q_m$, $n_j =1$ for each $j$.
Hence, for a sequence $\widetilde{U}\in B_{1, \ldots, 1}$,
corresponding to a permutation $s_1s_2 \cdots s_m$,  it makes sense to simplify our notation to 
\[
\phi(\widetilde{U}):=    (a_{12}, a_{13}, a_{23},  \ldots, a_{1m},  a_{2m}, \ldots  , a_{(m-1)m}),
\]
where $a_{ij}$ is equal to the number of $i$'s following the $1^{\rm st}$ (and only) $j$ in $\widetilde{U}$.
From  Theorem \ref{Th:gridgraph}, the length of the sequence
$\phi(\widetilde{U})$ is  $M = \sum_{i=2}^{m}(i-1)n_i= \sum_{i=1}^m(i-1)= {m \choose 2}$.   Since every element in $\{1,2, \ldots ,m\}$ occurs precisely once in the permutation $s_1s_2 \ldots s_m$,  we have that for every pair $i,j$ with
$1\le i<j\le m$,
 \[
    a_{ij} = \begin{cases}
        0  & \text{if  $i$ occurs before $j$ and}\\
        1  & \text{if $i$ occurs after $j$.}
\end{cases}
  \]
In the case of $Q_m$, one may interpret $\phi(\widetilde{U})$  as the edge set of a complete directed graph on $m$ vertices as follows.  For each pair of vertices $i,j$ with  $1\le i<j \le m$,  the  edge  $ij$ is oriented from $i$ to $j$ 
if $a_{ij}=0$,   and  from $j$ to $i$ if $a_{ij}=1$. 
A complete directed graph having this transitive property is called a {\em transitive tournament}.    We conclude that for the hypercube $Q_m$,  the image of 
$\phi$ corresponds precisely to the set of $m!$ transitive tournaments.

\bibliographystyle{amsplain}
\bibliography{vdec}

\end{document}